\newtheorem{thm}{Theorem}[section]
\newtheorem{lem}[thm]{Lemma}
\newtheorem{prop}[thm]{Proposition}
\newtheorem{prob}[thm]{Problem}
\theoremstyle{definition}
\newtheorem{defn}[thm]{Definition}
\theoremstyle{remark}
\newtheorem{ex}[thm]{Example}
\numberwithin{equation}{section}
\newcommand{\vertiii}[1]{{\left\vert\kern-0.25ex\left\vert\kern-0.25ex\left\vert #1 
    \right\vert\kern-0.25ex\right\vert\kern-0.25ex\right\vert}}
\newcommand{\defined}[1]{\emph{#1}}
\newcommand{\mc}[1]{\mathcal{#1}}
\newcommand{\op}[1]{\operatorname{#1}}
\newcommand{\ignore}[1]{}
\newcommand{\mb}[1]{\mathbf{#1}}
\newcommand{\tp}{\operatorname{tp}}
\renewcommand{\phi}{\varphi}
\begin{document}

\title[Omitting Types and the Baire Category Theorem]{Omitting Types and the Baire Category Theorem}%
\author[C. J. Eagle]{Christopher J. Eagle${}^1$} 
\address[C. J. Eagle]{University of Victoria, Department of Mathematics and Statistics, PO BOX 1700 STN CSC, Victoria, British Columbia, Canada, V8W 2Y2}%
\email{eaglec@uvic.ca}
\urladdr{http://www.math.uvic.ca/~eaglec/}

\author[F. D. Tall]{Franklin D. Tall${}^1$}
\address[F. D. Tall]{University of Toronto, Department of Mathematics, 40 St. George St., Toronto, Ontario, Canada M5S 2E4}
\email{f.tall@utoronto.ca}
\urladdr{http://www.math.toronto.edu/tall/}
\thanks{${}^1$ Research supported by NSERC Grant A-7354}

\subjclass[2010]{03C95, 03C15, 03C30, 54E52, 54G20, 54H99, 54A35}%
\keywords{Omitting types, Baire category, type-space functor, Banach-Mazur game}%

\date{\today}%
\begin{abstract}
The Omitting Types Theorem in model theory and the Baire \\Category Theorem in topology are known to be closely linked.  We examine the precise relation between these two theorems.  Working with a general notion of logic we show that the classical Omitting Types Theorem holds for a logic if a certain associated topological space has all closed subspaces Baire.  We also consider stronger Baire category conditions, and hence stronger Omitting Types Theorems, including a game version.  We use examples of spaces previously studied in set-theoretic topology to produce abstract logics showing that the game Omitting Types statement is consistently not equivalent to the classical one.
\end{abstract}
\maketitle
\section*{Introduction}\label{sec:Introduction}
The Baire Category Theorem asserts that, in certain classes of spaces, the intersection of countably many dense open sets is dense.  This theorem is used in many fields of mathematics to assure the existence of objects satisfying countably many requirements simultaneously.  It is well known that (locally) compact spaces and completely metrizable spaces satisfy the Baire Category Theorem.  General topologists have developed a finely graded collection of other topological properties that suffice to prove the Baire Category Theorem. \cite{Aarts1974} is a good survey of many of them. Spaces satisfying the conclusion of the Baire Category Theorem are called \emph{Baire spaces}.  They do not behave well with respect to topological operations -- notoriously, with respect to products. See \cite{Fleissner1978}. 

The Omitting Types Theorem for first-order logic asserts that for any countable theory $T$, a type of $T$ can be omitted in a model of $T$ provided that it is not implied by a single formula (modulo $T$).  One way to prove the Omitting Types Theorem is to translate the relevant definitions into the language of topology, and then apply the Baire Category Theorem (the relevant spaces are compact); for instance, see \cite{Poizat2000}.  This method is also able to prove omitting types results in more general settings, including countable fragments of $L_{\omega_1, \omega}$ \cite{Morley1974}, continuous logic \cite{Caicedo2014}, and countable fragments of $L_{\omega_1, \omega}$ for metric structures \cite{Eagle2014}.

It is natural to ask whether the use of Baire spaces in the proofs of the omitting types theorems mentioned above is in any sense essential.  To address this question we consider an abstract topological setting in which core notions from model theory can be developed.  This point of view was first used by Morley \cite{Morley1974}.  A similar setting was also used by Ben Yaacov \cite{BenYaacov2003} as part of his study of \emph{cats}, and also by Knight \cite{Knight2007} (and in simplified form, \cite{Knight2010}).  In this framework we describe three possible \emph{omitting types properties} an abstract logic might have, and relate those properties to topological properties (and more specifically, properties related to Baire category) of the type-spaces of the logic.  

As mentioned above, the property of being a Baire space is not well-behaved with respect to many topological operations.  In Section \ref{sec:TypeSpace:Space} we construct, from any topological space $X$, an abstract logic that has $X$ as its space of $1$-types.  We then use examples of this kind to show in Section \ref{sec:Distinguishing} that the strongest of the omitting types properties we define is consistently strictly stronger than the others.  We conclude with some remarks on the descriptive set-theoretic complexity required of examples separating our omitting types properties.           

\subsection*{Acknowledgements}
Part of this paper was written during the Focused Research Group ``Topological Methods in Model Theory" at the Banff International Research Station.  We thank BIRS for providing an excellent atmosphere for collaboration and research, and we appreciate the comments we received from Xavier Caicedo, Eduardo Du\'e\~{n}ez, and Jos\'e Iovino during the Focused Research Group.

\section{Type-space functors and abstract logics}\label{sec:TypeSpace}
In order to explore the connection between Baire Category and Omitting Types, we need a suitable framework in which to connect topological spaces to logics.  Specifically, we consider collections of topological spaces satisfying some conditions similar to those enjoyed by the type-spaces of a first order theory.  This idea goes back to Morley \cite{Morley1974}, but our primary reference for definitions is \cite{Knight2007} (though our definitions do vary somewhat from the ones presented there; see below for details).  While it is possible to use this setting to produce abstract logics (in the sense of \cite{Barwise1985}), we will not need to do so for our purposes in this note.

\subsection{Basic definitions}\label{sec:TypeSpace:Definitions}
To simplify notation, whenever $\kappa$ is a cardinal and $A \in [\kappa]^n$, we write $A = \{a_0 < \ldots < a_{n-1}\}$ to mean that $A = \{a_0, \ldots, a_{n-1}\}$ and $a_0 < \ldots < a_{n-1}$.

\begin{defn}
A \defined{type-space functor} is a contravariant functor $S$ from $\omega$ to the category of topological spaces with continuous open maps, satisfying a weak amalgamation property.  Explicitly, $S$ takes each $n \in \omega$ to a topological space $S_n$, and each $f : n \to m$ to a continuous open map $Sf : S_m \to S_n$, satisfying the following conditions.  Here $i_k : k \to k+1$ is the inclusion, and $d_m : m+1 \to m+2$ is $d(j) = j$ for $j < m$ and $d(m) = m+1$.
\begin{enumerate}
\item{
For all $f : n \to m$ and $g : m \to k$, $S(g \circ f) = (Sf) \circ (Sg)$.
}
\item{
If $\iota_n : n \to n$ is the identity function then $S\iota_n : S_n \to S_n$ is the identity function.
}
\item{
For each $m \in \omega$, $p \in S_m$, $q \in (Si_m)^{-1}(\{p\})$, and non-empty open $U \subseteq (Si_m)^{-1}(\{p\}$, let $WAP_S(m, p, q, U)$ be the statement that there is $r \in S_{m+2}$ such that $(Si_{m+1})(r) = q$ and $(Sd_m)(r) \in U$.  We require that $WAP_S(m, q, p, U)$ holds for all such $m, p, q, U$.
}
\end{enumerate}
\end{defn}

Our definition is based on the one in Knight \cite{Knight2007}, with some of the simplifications introduced in \cite{Knight2010}.  We differ from Knight in that we require each map $Sf$ to be open, we only require a weak version of the amalgamation property, and we do not require the spaces $S_n$ to be $0$-dimensional.  

The motivating example of a type-space functor is the collection of type-spaces of a theory $T$ (in first order logic, or more generally in a fragment of $L_{\omega_1, \omega}$), where for $f : n \to m$ the map $Sf : S_m(T) \to S_n(T)$ is defined by
\[(Sf)(p) = \{\phi(x_0, \ldots, x_{n-1}) : \phi(x_{f(0)}, \ldots, x_{f(n-1)})\}.\]
We will consider this example in more detail in Section \ref{sec:Examples} below, after introducing more definitions.  For the moment we note that Morley \cite{Morley1974} showed that if a type-space functor has each $S_n$ a 0-dimensional Polish space, and if a stronger amalgamation condition holds, then $S$ arises from a theory in a countable fragment of $L_{\omega_1, \omega}$ in the way described above, and moreover the theory obtained is essentially unique.  Ben Yaacov \cite{BenYaacov2005} showed that without the 0-dimensionality assumption it is still often possible to give a syntactic presentation of a type-space functor, but the associated logic is that of metric structures (see also \cite{BenYaacov2008a}).

Later we will make use of an analogue of the space of $\omega$-types (that is, types in a fixed $\omega$-sequence of variables) for an arbitrary type-space functor.  As type-space functors are designed to capture the model theory of languages in which formulas have finitely many free variables, we expect that knowing the topology of each $S_n$ should suffice to determine the topology of $S_{\omega}$.  Our next definition ensures that this is the case.

\begin{defn}
Let $S$ be a type-space functor.  Define $S_{\omega}$ to be the inverse limit of the spaces $S_n$, using each $S\iota_{n, m}$ as a bonding map for $n < m$.  Concretely,
\[S_{\omega} = \{(a_n)_{n < \omega} \in \prod_{n < \omega} S_n : \text{ for all $n < m$, $a_n = (S\iota_{n, m})(a_m)$}\},\]
with the subspace topology.

For each map $f : n \to \omega$ we have a map $Sf : S_\omega \to S_n$.  To define this map, let $m$ be large enough so that the image of $f$ is included in $m$.  Then define $f' : n \to m$ to be $f'(i) = f(i)$ for all $i < n$.  Finally, define $Sf : S_\omega \to S_n$ by $(Sf)((a_j)_{j < \omega}) = (Sf')(a_m)$.
\end{defn}

The maps $Sf : S_\omega \to S_n$ are well-defined.  Indeed, suppose that we chose another $m'$ to use in defining $Sf$, and without loss of generality assume that $m < m'$.  Defining $f'' : n\to m'$ as above, we then have $f'' = \iota_{m, m'} \circ f'$, and hence
\[(Sf'')(a_{m'}) = S(\iota_{m, m'} \circ f')(a_{m'}) = (Sf') \circ (S\iota_{m, m'})(a_{m'}) = (Sf')(a_m),\]
where the last equality follows from the definition of $S_\omega$.  One equally readily verifies that the maps are continuous and open.\hfill\qed

\subsection{Model theory}\label{sec:TypeSpace:ModelTheory}
In this section we describe how to view a type-space functor as a general setting in which to study model theory.  The key definition is that of a \emph{model}, which we take from \cite[Definition 2.9]{Knight2007}.

\begin{defn}\label{def:Model}
Let $S$ be a type-space functor, and let $\kappa$ be a cardinal.  A \defined{model of size $\kappa$} for $S$ is a function $M$, whose domain is $[\kappa]^{<\omega}$, satisfying the list of properties below for all $A = \{a_0 < \ldots < a_{n-1}\} \in [\kappa]^n$.
\begin{enumerate}
\item{
$M(A) \in S_n$.
}
\item{
If $B = \{b_0 < \ldots < b_{m-1}\} \in [\kappa]^m$, $A \subseteq B$, and $g : n \to m$ is the function satisfying $a_i = b_{g(i)}$ for all $i$, then $M(A) = (Sg)(M(B))$.
}
\item{
If $U \subseteq (S\iota_{n, m})^{-1}(M(A))$ is open, then there is $B = \{b_0 < \ldots < b_{m-1}\} \in [\kappa]^m$ with $A \subseteq B$, and a permutation $g$ of $m$ satisfying $a_i = b_{g(i)}$ for all $i < n$, such that $(Sg)(M(B)) \in U$.
}
\end{enumerate}
\end{defn}

The intuition for this definition is that if $T$ is a first-order theory, and $\mc{M}~\models~T$ is enumerated as $\{m_{\alpha} : \alpha < \kappa\}$, then defining 
\[M(\{i_0 < \ldots < i_{n-1}\}) = \tp^M(m_{i_0}, \ldots, m_{i_{n-1}})\]
gives a model in the above sense.  In fact, when $S = S(T)$ is the type-space functor of a first-order theory, then every model for $S$ arises in this way \cite[Proposition 2.10]{Knight2007} (the proof there is  for countable models, but the argument for general $\kappa$ is the same).

Once we have models, we want to have a satisfaction relation, which we adapt from \cite[Definition 2.11]{Knight2007}.  We do not have a notion of formulas for type-space functors, but we think of elements of $S_n$ as (complete) $n$-types, and so we can define what it means for a finite subset of $\kappa$ to realize an element of $S_n$, in a way that generalizes the case of a tuple in a model realizing a type in first-order logic\footnote{It is not unusual to consider model theory where we have a notion of \emph{type}, but not one of \emph{formula}; for instance, this is the case in Abstract Elementary Classes \cite{Shelah1987}.}.  We could introduce a notion of formula by fixing, for each $n$, a base of closed sets for the space $S_n$, and thinking of basic closed sets as representing formulas while arbitrary closed sets are (partial) types.  We will not pursue this direction further in this note.

\begin{defn}\label{def:Satisfaction}
Let $S$ be a type-space functor, let $M$ be a model for $S$ of size $\kappa$, and let $(a_0, \ldots, a_{n-1})$ be a tuple of length $n$ from $\kappa$.  Let $A=\{a_0,~\ldots,~a_{n-1}\}=\{c_0~<~\ldots~<~c_{k-1}\}$.  Let $g : n \to k$ be the function such that $c_{g(i)} = a_i$ for all $i < n$.  Then we define
\[M \models p(a_0, \ldots, a_{n-1}) \iff p = (Sg)(M(A)).\]
In this case we also say that $(a_0, \ldots, a_{n-1})$ \emph{realizes} $p$ in $M$.  If there is no tuple $(a_0, \ldots, a_{n-1})$ realizing $p$ in $M$ then we say $M$ \emph{omits} $p$.

If $A \subseteq S_n$, we write $M \models A(a_0, \ldots, a_{n-1})$ to mean $M \models p(a_0, \ldots, a_{n-1})$ for some $p \in A$.
\end{defn}


\subsection{Examples}\label{sec:Examples}
Throughout the remainder of this paper we will make frequent reference to type-space functors of two particular kinds.  The first is the motivating example we have mentioned several times above, while the second provides examples built on a given topological space.

\subsubsection{First-order type-space functors}\label{sec:TypeSpace:FirstOrder}
First, we have type-space functors associated to theories in first-order logic.

\begin{defn}
Let $T$ be a first-order theory.  The \defined{type-space functor of $T$}, $S(T)$, consists of the following data.  For each $n < \omega$, let $S_n$ be the set of all complete $n$-types of $T$, considered with the logic topology (that is, the topology generated by basic closed sets of the form $[\phi] = \{p \in S_n : \phi \in p\}$ for each $n$-ary formula $\phi$).  To each $f : n \to m$ associate the map $Sf : S_m \to S_n$ defined by $(Sf)(p) = \{\phi(x_0, \ldots, x_{n-1} : \phi(x_{f(0)}, \ldots, x_{f(n-1)} \in p\}$.

We say that a type-space functor is a \defined{first-order type space functor} if it is the type-space functor of some first-order theory $T$.
\end{defn}

\begin{prop}
For every first-order theory $T$, the type-space functor of $T$ is a type-space functor.  The space $S_\omega$ is homeomorphic to the logic topology on the set of $\omega$-types of $T$.
\end{prop}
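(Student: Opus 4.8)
The plan is to split the argument into two halves: verifying the three axioms of a type-space functor for $S(T)$, and then identifying the inverse limit $S_\omega$ with the space of $\omega$-types. Throughout I would use the standard fact that, under the logic topology, each $S_n$ is a compact Hausdorff totally disconnected (Stone) space in which every basic set $[\phi]$ is \emph{clopen}, since both $[\phi]$ and its complement $[\neg\phi]$ are basic closed. This clopenness is exactly what makes the openness computations below go through.

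For the functor axioms, the identity law and the contravariant composition law are immediate from the substitution description of $Sf$: one checks that $\phi \in (S(g \circ f))(p)$ and $\phi \in ((Sf)\circ(Sg))(p)$ unwind to the same syntactic substitution of variables. Continuity of each $Sf : S_m \to S_n$ is equally direct, since $(Sf)^{-1}([\phi(x_0,\ldots,x_{n-1})]) = [\phi(x_{f(0)},\ldots,x_{f(n-1)})]$ is itself a basic clopen set, and such sets generate the topology on $S_n$. For openness I would factor an arbitrary $f : n \to m$ as a surjection $\alpha$ followed by an injection $\beta$, so that $Sf = (S\alpha)\circ(S\beta)$ by contravariance, decompose $\alpha$ into transpositions and elementary contractions (maps identifying two coordinates) and $\beta$ into transpositions and inclusions $i_k$, and then reduce to checking that $S$ of each generator is open. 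Transpositions give homeomorphisms; for an inclusion, $(Si_m)([\phi]) = [\exists x_m\, \phi]$ is clopen; and for a contraction identifying coordinates $i$ and $j$, the image $(Sc)([\psi])$ equals $[\psi] \cap [x_i = x_j]$, which is clopen. Since a composition of open maps is open, every $Sf$ is open.

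For the weak amalgamation property $WAP_S(m,p,q,U)$, given $p \in S_m$, an $(m+1)$-type $q$ restricting to $p$, and a nonempty open $U \subseteq (Si_m)^{-1}(p)$, I would pick any $p' \in U$ and consider the set of formulas $q(x_0,\ldots,x_m) \cup p'(x_0,\ldots,x_{m-1},x_{m+1})$. Since $q$ and $p'$ both restrict to $p$ on $x_0,\ldots,x_{m-1}$ and otherwise involve the distinct variables $x_m$ and $x_{m+1}$, compactness shows this set is consistent with $T$: a finite fragment asks for witnesses of $x_m$ and of $x_{m+1}$ over a realization of $p$, and these exist independently because $q$ and $p'$ are each consistent. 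Completing this set to some $r \in S_{m+2}$ then yields $(Si_{m+1})(r) = q$ and $(Sd_m)(r) = p' \in U$, as required.

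For the final claim I would define $\Phi$ sending an $\omega$-type $p$ to the coherent sequence $(p\restriction\{x_0,\ldots,x_{n-1}\})_{n<\omega} \in S_\omega$, with inverse taking a coherent sequence $(a_n)_{n<\omega}$ to the complete $\omega$-type $\bigcup_{n<\omega} a_n$; coherence together with completeness of each $a_n$ makes these mutually inverse bijections. The inverse-limit topology on $S_\omega$ is generated by the preimages $\pi_n^{-1}([\phi])$ of basic clopen sets, and under $\Phi$ these correspond exactly to the basic clopen sets $[\phi]$ generating the logic topology on the $\omega$-types, so $\Phi$ is a homeomorphism. The one genuinely non-routine point is the openness of the contraction maps: this is where the argument relies on equality formulas defining clopen sets, a feature special to the Stone-space type spaces of first-order logic, and I expect it to be the main obstacle to a naive verification.
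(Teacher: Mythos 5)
Your proposal is correct, and it reaches the paper's conclusions by a genuinely different route at the two non-trivial steps. For openness, the paper computes the image of a basic clopen set under an arbitrary $Sf$ in one stroke, as the clopen set defined by substituting variables according to a chosen section of $f$ and existentially quantifying the variables outside the image of $f$; your factorization of $f$ into transpositions, elementary contractions, and inclusions replaces this single computation by three easy ones. The modularity has a real payoff: for non-injective $f$ the image of $Sf$ is constrained to lie in the ``diagonal'' clopen set given by the equalities $x_i = x_j$ whenever $f(i) = f(j)$, and your contraction case records exactly this via $[\psi] \cap [x_i = x_j]$, whereas the paper's displayed formula omits these equality conjuncts and, read literally, is only accurate for injective $f$. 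For the weak amalgamation property, the paper shrinks $U$ to a basic clopen set $[\psi]$, realizes $q$ in a model, and uses $(\exists y)\psi \in p$ to produce a witness; you instead pick a point $p' \in U$ and amalgamate the two complete types $q$ and $p'$ over their common restriction $p$ by compactness. The semantic core is the same --- your finite-fragment verification is precisely the paper's argument applied to finite conjunctions, and it works because both types restrict to $p$, not merely because each is consistent, as your wording briefly suggests --- but your version yields a mild strengthening: $(Sd_m)(r)$ equals a prescribed point of $U$ rather than merely landing in $U$, and no reduction to basic open sets is needed. The identification of $S_\omega$ with the logic topology on $\omega$-types is the same restriction map in both proofs, with yours spelling out the inverse and the matching of subbasic sets.
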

\begin{proof}
It is straightforward to verify that for any $f : n \to m$ the map $Sf$ is continuous, and that properties (1) and (2) in the definition of type-space functors hold.  To see that each $Sf$ is open, consider any basic open set $U = [\psi(x_0, \ldots, x_{n-1})] \subseteq S_n$, and any $f : n \to m$.  For each $i \in \op{im}(f)$, let $r_i \in n$ be such that $i = f(r_i)$.  If $i \not\in \op{im}(f)$, let $r_i = i$.  Let $j_1, \ldots, j_k$ enumerate $m \setminus \op{im}(f)$.  For any $q \in S_n$, we have:
\begin{align*}
q \in (Sf)[U] &\iff (\exists p \in U) q = Sf(p) \\
&\iff (\exists p \in S_m)(\psi(x_0, \ldots, x_{n-1}) \in p \text{ and } q = \{\phi(x_0, \ldots, x_{n-1}) : \phi(x_{f(0)}, \ldots, x_{f(n-1)}) \in p\}) \\
&\iff ((\exists x_{j_1} \cdots \exists x_{j_k})\psi(x_{r_0}, x_{r_1}, \ldots, x_{r_{n-1}})) \in q \\
&\iff q \in [(\exists x_{j_1} \cdots \exists x_{j_k})\psi(x_{r_0}, \ldots, x_{r_{n-1}})]
\end{align*}
Therefore $(Sf)[U]$ is a (basic) open set in $S_n$.

Now we check statement (3).  Fix $m < \omega, p \in S_m, q \in (Si_m)^{-1}(\{p\})$, and a non-empty open $U \subseteq (Si_m)^{-1}(\{p\})$.  By shrinking $U$ if necessary, we may assume that $U = [\psi(x_0, \ldots, x_m)]$ for some formula $\psi$.  Then $U$ being non-empty and included in $(Si_m)^{-1}(p)$ implies that $p~\cup~\{\psi(x_0), \ldots, x_m\}$ is consistent, so $((\exists y) \psi(x_0, \ldots, x_{m-1}, y)) \in p$.  Let $(a_0, \ldots, a_m)$ be a realization of $q$ in some model $M \models T$.  The statement $q~\in~(Si_m)^{-1}(p)$ implies that $(a_0, \ldots, a_{m-1})$ realizes $p$ in $M$.  Therefore there is $a_{m+1} \in M$ such that $M \models \psi(a_0, \ldots, a_{m-1}, a_{m+1})$.  Let $r = \tp_M(a_0, \ldots, a_{m-1}, a_m, a_{m+1})$.  Then
\begin{align*}
(Si_{m+1})(r) &= \{\phi(x_0, \ldots, x_m) : \phi(x_0, \ldots, x_m) \in r\} \\
 &= \{\phi(x_0, \ldots, x_m) : M \models \phi(a_0, \ldots, a_m)\} \\
 &= \tp_M(a_0, \ldots, a_m) \\
 &= q
\end{align*}
Also, $M \models \psi(a_0, \ldots, a_{m-1}, a_{m+1})$, so $\psi(x_0, \ldots, x_{m-1}, x_{m+1}) \in r$.  Now \\ $\psi(x_0, \ldots, x_{m-1}, x_{m+1}) = \psi(x_{d_m(0)}, \ldots, x_{d_m(m-1)}, x_{d_m(m)})$, so by definition \\ $\psi(x_0, \ldots, x_m) \in (Sd_m)(r)$, i.e., $(Sd_m)(r) \in U$.

To see that $S_\omega$ is the space of $\omega$-types of $T$, first observe that the definition of $S_\omega$ in this context gives
\[S_\omega = \{(a_0, a_1, \ldots) \in \prod_{n < \omega}S_n : \text{ for all } n < m, a_n = \{\phi(x_0, \ldots, x_{n-1}) : \phi(x_0, \ldots, x_{n-1}) \in a_m\}\}.\]
Given an $\omega$-type $p(x_0, x_1, \ldots)$, for each $n < \omega$ let $p_n$ be the restriction of $p$ to the first $n$ variables, that is,
\[p_n = \{\phi(x_0, \ldots, x_{n-1}) : \phi(x_0, \ldots, x_{n-1}) \in p\}.\]  Then the map associating $p$ to the sequence $(p_0, p_1, \ldots)$ is the required homeomorphism from the logic topology on the $\omega$-types to $S_\omega$.
\end{proof}

The model theory also agrees with classical model theory in this case.  Suppose that $T$ is a first-order theory, and $S$ is the corresponding type-space functor.  Suppose also that $\mc{M} \models T$ is enumerated as $\{m_\alpha : \alpha < \kappa\}$.  Define $M$ on $[\kappa]^{<\omega}$ by $M(\{i_0 < \ldots < i_{n-1}\}) = \tp^M(m_0, \ldots, m_{n-1})$.  It is then routine to verify that $M$ is a model (in the sense of Definition \ref{def:Model}), and that for any $p \in S_n$ and any $i_0, \ldots, i_{n-1} \in \kappa$,
\[M \models p(i_0, \ldots, i_{n-1}) \iff \mc{M} \models p(m_{i_0}, \ldots, m_{i_{n-1}}).\]
Conversely, Knight \cite[Proposition 2.10]{Knight2007} shows that every model of $S$ arises in this way from a (classical) model of $T$.

\subsubsection{Type-space functors generated by a space}\label{sec:TypeSpace:Space}
Our second class of examples of type-space functors gives examples that do not come from theories in classical logics.  We will primarily use examples of the following kind in Section \ref{sec:Distinguishing} as a source of counterexamples.

\begin{defn}
Let $X$ be any topological space.  The \defined{type-space functor of $X$}, denoted $S_X$, consists of the following data.  For each $n < \omega$, define $S_n = X^n$, and to each $f : n\to m$, associate the map $(Sf) : X^m \to X^n$ defined by $(Sf)(x_0, \ldots, x_{m-1}) = (x_{f(0)}, \ldots, x_{f(n-1)})$.
\end{defn}

\begin{prop}\label{prop:Xomega}
For any topological space $X$, the type-space functor of $X$ is a type-space functor.  The space $S_\omega$ is homeomorphic to the (Tychonoff) product topology on $X^\omega$.
\end{prop}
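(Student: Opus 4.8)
The statement has two parts: that $S_X$ satisfies the three axioms defining a type-space functor, and that the inverse limit $S_\omega$ is $X^\omega$ with its product topology. The plan is to read every claim directly off the reindexing formula $(Sf)(x_0, \ldots, x_{m-1}) = (x_{f(0)}, \ldots, x_{f(n-1)})$, since each $Sf$ merely permutes, deletes, and possibly repeats coordinates. Axiom (1) is immediate: for $f : n \to m$ and $g : m \to k$, both $S(g \circ f)$ and $(Sf) \circ (Sg)$ send $(x_0, \ldots, x_{k-1})$ to the tuple whose $j$-th entry is $x_{g(f(j))}$. Axiom (2) holds because $\iota_n$ fixes each index. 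Continuity of each $Sf$ is the universal property of the product topology: composing $Sf$ with the $j$-th coordinate projection $X^n \to X$ yields the projection $X^m \to X$ sending $x$ to $x_{f(j)}$, which is continuous.

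The openness requirement is the step I expect to be the main obstacle. For a basic open box $U_0 \times \cdots \times U_{m-1}$ in $X^m$ with each $U_i$ nonempty, its image under $Sf$ consists of those $(y_0, \ldots, y_{n-1})$ with $y_j \in U_{f(j)}$ for every $j$ and with $y_j = y_{j'}$ whenever $f(j) = f(j')$. When $f$ is injective there are no coincidence constraints, so the image is the box $\prod_{j < n} U_{f(j)}$, which is open; since $Sf$ carries unions to unions, it is then an open map. The delicate point is that for non-injective $f$ the image lies on a diagonal of $X^n$ and need not be open unless $X$ is suitably discrete, so openness is genuinely a constraint on the injective reindexing maps. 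I would therefore note that every map actually invoked in the axioms and below -- the inclusions $i_m$, the maps $d_m$, and the bonding maps $S\iota_{n,m}$ -- is injective, hence open by the box computation.

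For the weak amalgamation axiom (3) I would compute the three maps explicitly: $Si_m$ deletes the last of $m+1$ coordinates, $Si_{m+1}$ deletes the last of $m+2$ coordinates, and $Sd_m$ deletes the coordinate indexed $m$. Writing $p = (p_0, \ldots, p_{m-1})$, the fiber $(Si_m)^{-1}(\{p\})$ is $\{p\} \times X$, so $q = (p_0, \ldots, p_{m-1}, q_m)$ for some $q_m$, and a nonempty open $U$ in that fiber corresponds to a nonempty open $V \subseteq X$. Then for any $t \in V$ the point $r = (p_0, \ldots, p_{m-1}, q_m, t)$ satisfies $(Si_{m+1})(r) = q$ and $(Sd_m)(r) = (p_0, \ldots, p_{m-1}, t) \in U$, which verifies $WAP_S(m, p, q, U)$.

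Finally, for $S_\omega$ the bonding map $S\iota_{n,m} : X^m \to X^n$ is exactly the projection onto the first $n$ coordinates, so a coherent sequence $(a_n)_{n < \omega}$ in the inverse limit is precisely one in which each $a_n$ is the initial segment of $a_m$. Such a sequence is determined by a single point $(x_k)_{k < \omega} \in X^\omega$ via $a_n = (x_0, \ldots, x_{n-1})$, giving a bijection $\Phi : X^\omega \to S_\omega$. I would check it is a homeomorphism coordinatewise: $\Phi$ is continuous since its composite with the projection to $X^n$ is $(x_k)_k \mapsto (x_0, \ldots, x_{n-1})$, and $\Phi^{-1}$ is continuous since $x_k$ is recovered as the last coordinate of $a_{k+1}$, a continuous function on $S_\omega$. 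This is the familiar identification of the inverse limit of the finite powers along coordinate projections with the full power $X^\omega$, completing the proof.
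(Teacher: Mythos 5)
Your proof follows the same route as the paper's: conditions (1) and (2) are read off the reindexing formula, the weak amalgamation witness is the same point $r=(p_0,\ldots,p_{m-1},q_m,t)$ with $t$ drawn from the last-coordinate projection of $U$ (the paper picks $r_{m+1}\in\pi[U]$, which is identical), and your map $\Phi$ is exactly the paper's $\theta(x_0,x_1,\ldots)=(x_0,(x_0,x_1),\ldots)$; your coordinatewise check that $\Phi$ and $\Phi^{-1}$ are continuous supplies a detail the paper merely asserts.

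The one genuine divergence is the openness requirement, and there you are right and the paper is not. The paper disposes of this step with ``it is easy to see for each $f : n \to m$ the map $Sf$ is continuous and open,'' but, exactly as you observe, openness fails for non-injective $f$: for $X=\mathbb{R}$ and $f : 2\to 1$ constant, $Sf$ is the diagonal embedding $x\mapsto(x,x)$, and the image of an open set is never open in $\mathbb{R}^2$. Since the definition of a type-space functor demands that \emph{every} $Sf$ be continuous and open, Proposition \ref{prop:Xomega} as literally stated is false for $X=\mathbb{R}$; what you prove --- openness for injective $f$ via the box computation, which covers all the structure maps $i_m$, $d_m$, $\iota_{n,m}$ appearing in the axioms --- is the correct repaired statement. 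Note that this defect is special to $S_X$: in the first-order case the image of $[\psi]$ under a non-injective $Sf$ is still open because the needed diagonal $[x_i = x_j]$ is clopen in $S_n(T)$, whereas the diagonal of $X^2$ is closed but not open. One caution about your closing remark that only injective maps are ``invoked below'': that is true of the axioms, but not of the rest of the paper. The proofs of Proposition \ref{prop:StrongImpliesClassical} and Theorem \ref{thm:OTTBaire} pull meagre sets back along $Sf_{j,\mb{i}}$ for $\mb{i}\in\omega^j$ with possibly repeated entries, citing that these maps are continuous, open and surjective; for $S_X$ they are neither open nor surjective (the preimage of the meagre diagonal of $\mathbb{R}^2$ under $\sigma\mapsto(\sigma_0,\sigma_0)$ is all of $S_\omega$), and indeed the diagonal is a meagre set of $2$-types realized in every model. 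So the flaw you caught is not cosmetic to this one proposition; restricting the index category to injections (or otherwise weakening the openness axiom) would have to be carried through the later arguments as well.
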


\begin{proof}
It is easy to see for each $f : n \to m$ the map $Sf$ is continuous and open, and that conditions (1) and (2) of the definition of type-space functors are satisfied.  To prove (3), we fix $m < \omega, p \in S_m = X^m$, $q \in (Si_m)^{-1}(\{p\})$, and a non-empty open $U \subseteq (Si_m)^{-1}(\{p\})$, and we verify $WAP_S(m, p, q, U)$.

Write $p = (p_0, \ldots, p_{m-1})$, $q = (q_0, \ldots, q_m)$.  The condition $q \in (Si_m)^{-1}(\{p\})$ means that
\[p = (Si_m)(q) = (q_{i_m(0)}, \ldots, q_{i_m(m-1)}) = (q_0, \ldots, q_{m-1}).\]
That is, $q = (p_0, \ldots, p_{m-1}, q_m)$.  We must find $r = (r_0, \ldots, r_m, r_{m+1}) \in S_{m+2} = X^{m+2}$ such that $(Si_{m+1})(r) = q$ and $(Si_{m+1})(r) \in U$.  

For each $j < m$, let $r_j = p_j$, and $r_m = q_m$.  Then the first requirement is satisfied, for we have:
\[(Si_{m+1}(r)) = (r_0, \ldots, r_m) = (p_0, \ldots, p_{m-1}, q_m) = q.\]
Now let $\pi : X^{m+1} \to X$ be the projection onto the last coordinate, and pick any $r_{m+1} \in \pi[U]$.  Since $U \subseteq (Si_m)^{-1}(\{p\})$, every element of $U$ is of the form $(p_0, \ldots, p_{m-1}, a)$ for some $a \in X$, and hence in fact $U = \{(p_0, \ldots, p_{m-1})\} \times \pi[U]$.  In particular,
\begin{align*}
(Sd_m)(r) &= (r_{d_m(0)}, \ldots, r_{d_m(m-1)}, r_{d_m(m)}) \\
 &= (r_0, \ldots, r_{m-1}, r_{m+1}) \\
 &= (p_0, \ldots, p_{m-1}, r_{m+1}) \\
 & \in U.
\end{align*}
Therefore condition (3) is satisfied as well.

For the claim about $S_\omega$, applying the definition of $S_\omega$ in this context we get
\[S_\omega = \{(a_0, a_1, \ldots) \in \prod_{n < \omega}X^n : \text{ for all } n < m, a_n = \pi_{n, m}(a_m)\},\]
where $\pi_{n, m} : X^m \to X^n$ is the projection onto the first $n$ coordinates.  The map $\theta : X^\omega \to S_\omega$ defined by $\theta(x_0, x_1, \ldots) = (x_0, (x_0, x_1), (x_0, x_1, x_2), \ldots)$ is the required homeomorphism.
\end{proof}

\section{Omitting Types}\label{sec:OTT}
Proofs of Omitting Types Theorems using the Baire Category Theorem have been given for a variety of logics; for some examples, see \cite{Morley1974}, \cite{Poizat2000}, \cite{Caicedo2014}, \cite{Eagle2014}.  In this section we describe the relationship between Baire category properties and omitting types for type-space functors.  Throughout this section, $S$ denotes a type-space functor. 

Starting from the type-space functor $S$ we will be focusing on a certain subspace $S_\mc{W}$ of $S_\omega$.  The motivation for the following definition is that we are defining an analogue of the space of $\omega$-types of the form $\tp(a_0, a_1, \ldots)$, where $(a_0, a_1, \ldots)$ enumerates a countable model of a theory.  In fact, we will see in Lemma \ref{lem:ModelsW} that there is a correspondence between elements of the following space, and the models defined in Section \ref{sec:TypeSpace:ModelTheory} above.

\begin{defn}\label{def:MSigma}
Suppose that $\sigma \in S_\omega$.  For $A \in [\omega]^n$, let $f_A : n \to \omega$ be the map sending $i$ to the $i$th element of $A$ (in increasing order).  Then we define
\[M_\sigma(A) = (Sf_A)(\sigma).\]

In the opposite direction, given a countable model $M$, for each $n < \omega$ define $\sigma_n = M(\{0, 1, \ldots, n-1\})$, and let $\sigma_M$ be the equivalence class of $(\sigma_0, \sigma_1, \ldots)$ in $S_\omega$.
\end{defn}

\begin{lem}\label{lem:Simplicial}
For each $\sigma \in S_\omega$, the map $M_\sigma$ satisfies conditions (1) and (2) of Definition \ref{def:Model}.
\end{lem}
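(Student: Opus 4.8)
The plan is to verify both conditions directly from the definitions, reducing condition (2) to the functoriality already built into $S$.

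Condition (1) is immediate. For $A \in [\omega]^n$ the index map $f_A : n \to \omega$ has, by the construction of the extended maps on $S_\omega$, an associated continuous open map $Sf_A : S_\omega \to S_n$. Hence $M_\sigma(A) = (Sf_A)(\sigma) \in S_n$, as required.

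For condition (2), fix $A = \{a_0 < \cdots < a_{n-1}\} \subseteq B = \{b_0 < \cdots < b_{m-1}\}$ and let $g : n \to m$ be the map with $a_i = b_{g(i)}$. The key observation I would record first is the factorization $f_A = f_B \circ g$ of the index maps, checked pointwise: for each $i < n$,
\[
f_A(i) = a_i = b_{g(i)} = f_B(g(i)) = (f_B \circ g)(i).
\]
Granting this factorization, condition (2) falls out of the contravariant composition law. Applying $S$ to $f_A = f_B \circ g$ and evaluating at $\sigma$ gives
\[
M_\sigma(A) = (Sf_A)(\sigma) = S(f_B \circ g)(\sigma) = \bigl((Sg) \circ (Sf_B)\bigr)(\sigma) = (Sg)(M_\sigma(B)),
\]
which is exactly the desired identity.

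The only point requiring care — and the step I would treat as the \emph{main} (if minor) obstacle — is justifying the composition law $S(f_B \circ g) = (Sg) \circ (Sf_B)$ in this mixed setting, since $f_B$ has codomain $\omega$ while property (1) of a type-space functor is stated only for maps between finite ordinals. I would discharge this by unwinding the definition of $Sf$ for maps into $\omega$: choose $k < \omega$ large enough that $\op{im}(f_B) \subseteq k$, write $f_B' : m \to k$ for the corestriction, and note $\op{im}(f_B \circ g) \subseteq k$ with $(f_B \circ g)' = f_B' \circ g$. Writing $\sigma = (a_j)_{j < \omega}$, both sides then reduce to evaluation at $a_k$ through genuinely finite maps, where the finite composition law $S(f_B' \circ g) = (Sg) \circ (Sf_B')$ applies verbatim. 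This is routine, but it is the one place where the extension of $S$ to $S_\omega$ is actually invoked.
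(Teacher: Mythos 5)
Your proof is correct and follows essentially the same route as the paper's: the pointwise verification that $f_A = f_B \circ g$, followed by an appeal to the contravariant composition law to get $(Sg)(M_\sigma(B)) = (S(f_B \circ g))(\sigma) = (Sf_A)(\sigma) = M_\sigma(A)$. The one difference is that you explicitly justify the composition law $S(f_B \circ g) = (Sg) \circ (Sf_B)$ when $f_B$ has codomain $\omega$, by reducing to a finite corestriction; the paper uses this step silently, so your extra care is a legitimate refinement rather than a deviation or a gap.
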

\begin{proof}
Condition (1) is clear from the definition.  For (2), suppose that $A = \{a_0 < \ldots < a_{n-1}\} \in [\omega]^n$, $B = \{b_0 < \ldots < b_{m-1}\} \in [\omega]^m$, $A \subseteq B$, and $g : n \to m$ satisfies $a_i = b_{g(i)}$ for all $i$.  Then for each $i$,
\[f_B \circ g(i) = f_B(g(i)) = b_{g(i)} = a_i = f_A(i).\]
Therefore
\[(Sg)(M_\sigma(B)) = (Sg)((Sf_B)(\sigma)) = (S(f_B \circ g))(\sigma) = (Sf_A)(\sigma) = M_\sigma(A).\]
\end{proof}

In general we cannot expect $M_\sigma$ to be a model (that is, to satisfy condition (3) of Definition \ref{def:Model}), just as we cannot expect an arbitrary $\omega$-type of a first-order theory to specify a witness to every existential formula it implies.  We define $S_{\mc{W}}$ to be the set of those $\sigma \in S_\omega$ for which $M_\sigma$ is a model.  Formally:

\begin{defn}
For $\sigma \in S_\omega$, we put $\sigma \in S_{\mc{W}}$ if and only if for every $n < \omega$, every $A \in [\omega]^n$, every $m \geq n$, and every open $U \subseteq S_{\iota_{n, m}}^{-1}(\{M_\sigma(A)\})$, there is $B \in [\omega]^m$ and a permutation $g$ of $m$ such that $B \supseteq A$, and $(Sg)(M_\sigma)(B)) \in U$.
\end{defn}

Note that in this definition the set $U$ could equivalently be required to come from a fixed base for the topology of $S_\omega$.

\begin{lem}\label{lem:ModelsW}
The map $\sigma \mapsto M_\sigma$ is a one-to-one correspondence between $S_{\mc{W}}$ and the set of countable models of $S$, with inverse $M \mapsto \sigma_M$.
\end{lem}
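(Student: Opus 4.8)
The plan is to show that each of the two maps carries its source into the intended target and that the maps are mutually inverse; the crucial point is that the set $S_{\mc{W}}$ was defined precisely so as to encode condition (3) of Definition \ref{def:Model}. First I would record that, by Lemma \ref{lem:Simplicial}, for every $\sigma \in S_\omega$ the map $M_\sigma$ already satisfies conditions (1) and (2) of Definition \ref{def:Model}. Comparing the definition of $S_{\mc{W}}$ with condition (3) in the case $\kappa = \omega$ — where the index $m$ ranges exactly over the sizes of the sets $B \supseteq A$ — one sees that $\sigma \in S_{\mc{W}}$ holds if and only if $M_\sigma$ satisfies condition (3). Combining these, $\sigma \in S_{\mc{W}}$ if and only if $M_\sigma$ is a countable model of $S$; in particular $\sigma \mapsto M_\sigma$ maps $S_{\mc{W}}$ into the set of countable models.

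Next I would verify the two compositions. For $\sigma_{M_\sigma} = \sigma$, I compute the $n$th coordinate $M_\sigma(\{0, \ldots, n-1\})$. Here $f_{\{0, \ldots, n-1\}} : n \to \omega$ is the inclusion, so taking the truncation level $m = n$ in the definition of $Sf$ for $f : n \to \omega$ gives $f' = \iota_n$; since $S\iota_n$ is the identity we obtain $M_\sigma(\{0, \ldots, n-1\}) = (S\iota_n)(\sigma_n) = \sigma_n$, whence $\sigma_{M_\sigma} = (\sigma_0, \sigma_1, \ldots) = \sigma$.

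For the reverse composition $M_{\sigma_M} = M$, I fix a countable model $M$ and a set $A = \{a_0 < \cdots < a_{n-1}\}$. Choosing any $m > a_{n-1}$, the map $f_A : n \to \omega$ truncates to $f'_A : n \to m$ with $f'_A(i) = a_i$, so that $M_{\sigma_M}(A) = (Sf'_A)(\sigma_m)$, where $\sigma_m = M(\{0, \ldots, m-1\})$. The key observation is that $f'_A$ is exactly the function $g$ appearing in condition (2) of Definition \ref{def:Model} for the inclusion $A \subseteq \{0, \ldots, m-1\}$: with $b_j = j$ the requirement $a_i = b_{g(i)}$ forces $g(i) = a_i = f'_A(i)$. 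Hence condition (2) gives $M(A) = (Sf'_A)(\sigma_m) = M_{\sigma_M}(A)$. Since $M$ is a model, $M_{\sigma_M} = M$ is a model, so by the first paragraph $\sigma_M \in S_{\mc{W}}$; this shows $M \mapsto \sigma_M$ lands in $S_{\mc{W}}$, and together with the previous paragraph that the two maps are inverse bijections.

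The computations are all routine functorial bookkeeping, so I expect no serious difficulty. The one step requiring care — and the main, if modest, obstacle — is the verification $M_{\sigma_M} = M$: one must choose the truncation level $m$ large enough to contain $A$ and then recognize the truncated map $f'_A$ as precisely the coherence map $g$ of condition (2), so that the inverse-limit description of $\sigma_M$ through the diagonal sets $\{0, \ldots, m-1\}$ recovers all of $M$. The only other place to check is that the quantifier ``$m \geq n$'' and the unconstrained permutation $g$ in the definition of $S_{\mc{W}}$ match the clauses ``$B \in [\kappa]^m$'' and ``permutation $g$ of $m$ satisfying $a_i = b_{g(i)}$'' of condition (3), so that the definitional equivalence used in the first paragraph loses no generality.
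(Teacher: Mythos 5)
Your proof is correct and follows essentially the same route as the paper's: conditions (1) and (2) of Definition \ref{def:Model} come from the functorial computation (you cite Lemma \ref{lem:Simplicial}, while the paper repeats that computation verbatim), condition (3) is read off from the definition of $S_{\mc{W}}$, and the two maps are checked to be mutually inverse. In fact you supply more detail than the paper, which dismisses the verifications $\sigma = \sigma_{M_\sigma}$ and $M = M_{\sigma_M}$ as ``straightforward to check''; your explicit truncation argument identifying $f'_A$ with the coherence map $g$ of condition (2), and your flagging of the mismatch between the unconstrained permutation in the definition of $S_{\mc{W}}$ and the constrained one in condition (3), fill exactly the gaps the paper leaves to the reader.
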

\begin{proof}
Given $\sigma \in S_{\mc{W}}$, it is clear that $M_\sigma$ satisfies condition (1) of Definition \ref{def:Model}.  For condition (2), suppose that $A = \{a_0 < \ldots < a_{n-1}\} \in [\omega]^n$, $B = \{b_0 < \ldots < b_{m-1}\} \in [\omega]^m$, $A \subseteq B$, and $g : n \to m$ satisfies $a_i = b_{g(i)}$ for all $i$.  Then for each $i$,
\[f_B \circ g(i) = f_B(g(i)) = b_{g(i)} = a_i = f_A(i).\]
Therefore
\[(Sg)(M_\sigma(B)) = (Sg)((Sf_B)(\sigma)) = (S(f_B \circ g))(\sigma) = (Sf_A)(\sigma) = M_\sigma(A).\]
The definition of $S_{\mc{W}}$ exactly ensures that condition (3) is satisfied, so $M_\sigma$ is a model.  It is straightforward to check that for any $\sigma \in S_{\mc{W}}$ we have $\sigma = \sigma_{M_\sigma}$, and for any model $M$ we have $M = M_{\sigma_M}$.
\end{proof}

In light of Lemma \ref{lem:ModelsW}, we will sometimes identify a model $M$ with the sequence $\sigma_M$.

We define several omitting types properties that $S$ may have.  Another omitting types property, involving topological games, will appear in Section \ref{sec:Games}.

\begin{defn}
\begin{enumerate}
\item{
$S$ has the \emph{classical omitting types property} if for every closed non-empty $T \subseteq S_0$, and every sequence $(E_j)_{j < \omega}$ such that $E_j$ is meagre in $(S\iota_{0, j})^{-1}(T)$, there exists a model $M \models T$ such that $M$ omits every $E_j$.
}
\item{
$S$ has the \emph{strong omitting types property} if for every closed $C \subseteq S_\omega$, and every meagre $E \subseteq C$, there is a model in $C$ omitting $E$.
}
\end{enumerate}
\end{defn}

\begin{prop}\label{prop:StrongImpliesClassical}
The strong omitting types property implies the classical omitting types property.
\end{prop}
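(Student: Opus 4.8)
The plan is to reduce the classical statement to the strong one by pulling every relevant set back to the inverse limit $S_\omega$. Fix a closed nonempty $T \subseteq S_0$ together with a sequence $(E_j)_{j<\omega}$ such that each $E_j$ is meagre in $D_j := (S\iota_{0,j})^{-1}(T)$. Write $\pi_j : S_\omega \to S_j$ for the canonical projection, that is, $S$ applied to the inclusion $j \to \omega$; recall from the discussion following the definition of $S_\omega$ that each $\pi_j$ is continuous and open, and that $S\iota_{0,j} \circ \pi_j = \pi_0$ directly from the definitions. I would set $C := \pi_0^{-1}(T)$, which is closed in $S_\omega$, and record the key identity $C = \pi_j^{-1}(D_j)$ for every $j$: indeed $\sigma \in C \iff \pi_0(\sigma) \in T \iff (S\iota_{0,j})(\pi_j(\sigma)) \in T \iff \pi_j(\sigma) \in D_j$.

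The first technical step is a restriction lemma: if $f : Y \to Z$ is continuous and open and $D \subseteq Z$, then $f|_{f^{-1}(D)} : f^{-1}(D) \to D$ is again continuous and open, openness holding because $f(V \cap f^{-1}(D)) = f(V) \cap D$ is relatively open in $D$ for every open $V \subseteq Y$. Applying this with $f = \pi_j$ and $D = D_j$ shows that $\pi_j|_C : C \to D_j$ is continuous and open. Combined with the standard fact that a continuous open map pulls nowhere dense sets back to nowhere dense sets (if $N$ is nowhere dense then $(\pi_j|_C)^{-1}(\overline{N})$ is closed with empty interior, since any nonempty relatively open subset would map to a nonempty open subset of $\overline{N}$), this yields that $(\pi_j|_C)^{-1}(E_j)$ is meagre in $C$. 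Hence $E := \bigcup_{j<\omega} (\pi_j|_C)^{-1}(E_j)$ is meagre in $C$, being a countable union of meagre sets.

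Next I would invoke the strong omitting types property for the closed set $C$ and the meagre set $E \subseteq C$, obtaining a model $M$ with $\sigma_M \in C$ that omits $E$. That $M \models T$ is then immediate, since $\sigma_M \in C = \pi_0^{-1}(T)$ gives $\pi_0(\sigma_M) = M(\emptyset) \in T$.

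It remains to check that $M$ omits each $E_j$, and this is the step I expect to be the main obstacle, as it requires passing between realization of a finite type in $S_j$ and realization of a point of $S_\omega$. Suppose toward a contradiction that some $j$-tuple of elements of $M$ realizes a type $p \in E_j$. I would extend that tuple to an $\omega$-sequence from $M$ (always possible, $M$ being a nonempty countable model) and let $\tau \in S_\omega$ be the $\omega$-type of the resulting sequence, so that $\pi_j(\tau) = p$ while $\pi_0(\tau) = M(\emptyset) \in T$. Then $\tau \in \pi_j^{-1}(E_j) \cap C \subseteq E$ and $\tau$ is realized in $M$, contradicting the fact that $M$ omits $E$. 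The delicate points here are bookkeeping the reindexing implicit in Definition \ref{def:Satisfaction} (moving between tuples and the increasing enumerations of their underlying sets) and verifying that the type of an $\omega$-sequence from $M$ genuinely lies in the inverse limit $S_\omega$; the compatibility required for the latter is precisely condition (2) of Definition \ref{def:Model}.
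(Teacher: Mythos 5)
Your reduction has the same overall shape as the paper's proof (pull the data back to $S_\omega$, take a countable union of meagre pullbacks, invoke the strong property), and your preliminary steps are fine: the identity $C=\pi_j^{-1}(D_j)$, the restriction lemma for continuous open maps, and the fact that such maps pull nowhere dense sets back to nowhere dense sets are all correct and are implicitly used by the paper too. The genuine gap is in which maps you pull back along, and it surfaces exactly at the step you flagged as delicate. You pull each $E_j$ back only along the initial-segment projection $\pi_j$, so avoiding your set $E$ guarantees only that, for each $j$, the particular tuple $(0,1,\ldots,j-1)$ of $M$ fails to realize a type in $E_j$. Omission in the sense of Definition \ref{def:Satisfaction} quantifies over \emph{all} finite tuples from $M$: arbitrary indices, arbitrary order, repetitions allowed. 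To bridge this, you interpret ``$M$ omits $E$'' for $E\subseteq S_\omega$ as ``no $\omega$-sequence from $M$ realizes any element of $E$.'' But the paper never defines omission for subsets of $S_\omega$, and its own usage of the strong omitting types property is consistent only with the weaker reading ``there is a model with $\sigma_M\in C\setminus E$'': that is precisely what the paper extracts from the property in its proof of this proposition, and it is the reading under which the proofs of Theorem \ref{thm:StrongOTT} (in $(3)\to(1)$ an arbitrary point of $C'\setminus E'$ is declared to be a model omitting $E'$) and Theorem \ref{thm:GameOTT} (claimed to be immediate from the definitions) are valid. Under that reading your final contradiction collapses: $\tau$ is realized by a non-canonical $\omega$-sequence of $M$, which is perfectly compatible with $\sigma_M\notin E$, so no contradiction with ``$M$ omits $E$'' arises.

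Note also that the gap cannot be patched by closing $E$ under all $\omega$-sequences: there are continuum many maps $h:\omega\to\omega$, so the union of the pullbacks $(Sh)^{-1}(E)$ need not be meagre; and if you instead build your strong reading into the hypothesis, you are assuming a strictly stronger property than the one the paper's Theorem \ref{thm:StrongOTT} characterizes, hence proving a weaker proposition. The paper's repair is to close under all \emph{finite} tuples, of which there are only countably many: for each $j<\omega$ and each $\mb{i}\in\omega^j$ define $f_{j,\mb{i}}:j\to\omega$ by sending $k$ to the $k$th coordinate of $\mb{i}$, and set $F=\bigcup_{j<\omega}\bigcup_{\mb{i}\in\omega^j}(Sf_{j,\mb{i}})^{-1}(E_j)$, which is still a countable union of meagre sets (your restriction lemma applies verbatim to each $Sf_{j,\mb{i}}$, since $C=(Sf_{j,\mb{i}})^{-1}(D_j)$ as well). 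Then the weak conclusion $\sigma_M\notin F$ suffices by itself: unwinding Definitions \ref{def:Satisfaction} and \ref{def:MSigma}, the type realized by an arbitrary tuple $(a_0,\ldots,a_{j-1})$ of $M$ is exactly $(Sf_{j,\mb{i}})(\sigma_M)$ with $\mb{i}=(a_0,\ldots,a_{j-1})$, so no tuple realizes a type in $E_j$. With that change your argument becomes the paper's proof; no notion of realization of $\omega$-types is needed at all.
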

\begin{proof}
Fix a closed $T \subseteq S_0$.  To simplify notation, for $\alpha \leq \omega$, let $A_\alpha = (S\iota_{0, \alpha})^{-1}(T)$.  For each $j < \omega$ let $E_j \subseteq A_j$ be meagre.  For each $j<\omega$, and each $\mb{i} \in \omega^j$, let $f_{j, \mb{i}} : j \to \omega$ be defined by $f_{j, \mb{i}}(k) = \mb{i}_k$, where $\mb{i}_k$ is the $k$th element of $\mb{i}$ in increasing order.  Next, for each $j$ and $\mb{i}$, define
\[C_{j, \mb{i}} = (Sf_{j, \mb{i}})^{-1}(E_j).\]
Then each $\mc{C}_{j, \mb{i}}$ is meagre in $S_\mc{W} \cap A_\omega$ because $Sf_{j, \mb{i}}$ is continuous, open, and surjective. 
Finally, define
\[F = \bigcup_{j < \omega}\bigcup_{\mb{i} \in \omega^j}C_{j, \mb{i}}.\]
Then $F$ is meagre in $S_\mc{W} \cap A_\omega$.   By the strong omitting types property we can find a model $M$ such that $M$ (or, more precisely, $\sigma_M$) is in $(S_\mc{W} \cap A_\omega) \setminus F$.  For such an $M$ we have $(S\iota_{0, \omega})(M) \in T$, so $M \models T$.

To see that $M$ omits each $E_j$, suppose that $A \in \omega^j$.  Write $A = \{a_0, \ldots, a_{j-1}\} = \{c_0 < \ldots < c_{k-1}\}$, and let $g : j \to k$ be such that $c_{g(i)} = a_i$ for each $i < j$.  According to Definition \ref{def:Satisfaction}, to show that $M$ omits $E_j$ we must show that in this situation $(Sg)(M(A)) \not\in E_j$.  Unwinding Definition \ref{def:MSigma}, we obtain
\[(Sg)(M(A)) = (Sg)(M_{\sigma_M}(A)) = (Sg)(Sf_A)(\sigma_M) = S(f_A \circ g)(\sigma_M).\]
In the above calculation $f_A : k \to \omega$ sends $i$ to $c_i$, so we have $f_A \circ g(i) = c_{g(i)} = a_i$.  Letting $\mb{i} = (a_0, a_1, \ldots, a_{j-1})$ we therefore have $f_A \circ g = f_{j, \mb{i}}$.  Combining the above calculations, and using that we chose $M$ so that $(Sf_{j, \mb{i}})(\sigma_M) \not\in E_j$, we get
\[(Sg)(M(A)) = (Sf_{j, \mb{i}})(\sigma_m) \not\in E_j.\]
\end{proof}

Our omitting types properties conclude that certain countable models exist, but there are type-space functors with no countable models at all.  In order to conclude omitting types properties from topological facts about the type-space functor $S$ we must also assume that the collection of countable models for $S$ is sufficiently rich.  For type-space functors coming from countable theories this richness is provided by the downward L\"owenheim-Skolem theorem.  In general, we make the following definition.

\begin{defn}
Let $S$ be a type-space functor.  We say that $S$ has \emph{enough countable models} if $S_{\mc{W}}$ is dense in $S_\omega$.
\end{defn}

\begin{lem}\label{lem:EnoughCountableModels}
Let $S$ be a type-space functor.  If $S$ is the functor associated to a countable first-order theory, or if $S$ is generated by a separable topological space, then $S$ has enough countable models.
\end{lem}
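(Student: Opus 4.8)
The plan is to prove density of $S_{\mc{W}}$ in $S_\omega$ in the two cases separately, in each case showing that every nonempty basic open subset of $S_\omega$ contains an element of $S_{\mc{W}}$. \textbf{Case 1 (countable first-order $T$).} As established above, $S_\omega$ is homeomorphic to the space of $\omega$-types of $T$, and a base for its topology is given by the sets $\{\sigma \in S_\omega : \phi(x_0,\ldots,x_{n-1}) \in \sigma\}$ as $\phi$ ranges over formulas consistent with $T$. Fix such a nonempty basic open set, witnessed by a consistent $\phi(x_0,\ldots,x_{n-1})$. Then there are $\mc{M} \models T$ and a tuple $(a_0,\ldots,a_{n-1})$ with $\mc{M} \models \phi(a_0,\ldots,a_{n-1})$, and by the downward L\"owenheim--Skolem theorem (using that the language of $T$ is countable) we may take $\mc{M}$ countable with $a_0,\ldots,a_{n-1}$ in its domain. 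Enumerate $\mc{M} = \{m_0, m_1, \ldots\}$ so that $m_i = a_i$ for $i < n$. The induced $M(\{i_0 < \cdots < i_{k-1}\}) = \tp^{\mc{M}}(m_{i_0},\ldots,m_{i_{k-1}})$ is a model in the sense of Definition \ref{def:Model}, so by Lemma \ref{lem:ModelsW} the corresponding $\sigma_M$ lies in $S_{\mc{W}}$; and since the first $n$ entries of the enumeration realize $\phi$, the $\omega$-type $\sigma_M = \tp^{\mc{M}}(m_0, m_1, \ldots)$ contains $\phi(x_0,\ldots,x_{n-1})$, so $\sigma_M$ lies in the chosen basic open set. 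This gives density.

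\textbf{Case 2 (separable $X$).} By Proposition \ref{prop:Xomega} we identify $S_\omega$ with $X^\omega$ via $\theta$, so a point of $S_\omega$ is a sequence $(x_j)_{j<\omega}$, and for $A = \{a_0 < \cdots < a_{n-1}\}$ one computes $M_\sigma(A) = (x_{a_0},\ldots,x_{a_{n-1}})$. Unwinding the definition of $S_{\mc{W}}$ in this setting, the fiber $(S\iota_{n,m})^{-1}(\{M_\sigma(A)\})$ equals $\{(x_{a_0},\ldots,x_{a_{n-1}})\} \times X^{m-n}$, and since a permutation $g$ with $a_i = b_{g(i)}$ for $i < n$ fixes the first $n$ coordinates, the points $(Sg)(M_\sigma(B))$ available to witness membership are exactly those whose last $m-n$ coordinates are $(x_c : c \in B \setminus A)$ in an order determined by $g$. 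Thus it suffices to find, for each nonempty open $U$ in that fiber, distinct indices $c_1,\ldots,c_{m-n} \in \omega \setminus A$ placing the tail coordinates into $U$. I isolate the sufficient condition $(\ast)$: \emph{for every nonempty open $V \subseteq X$ the set $\{j : x_j \in V\}$ is infinite.} Indeed, given nonempty $U$ I may shrink it to a box $V_1 \times \cdots \times V_{m-n}$ with each $V_i$ nonempty open, then use $(\ast)$ to greedily choose distinct $c_i \in \{j : x_j \in V_i\} \setminus A$ avoiding the previously chosen indices, set $B = A \cup \{c_1,\ldots,c_{m-n}\}$, and pick $g$ arranging the new coordinates in the order $V_1,\ldots,V_{m-n}$; then $(Sg)(M_\sigma(B)) \in U$.

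It remains to build, inside a prescribed nonempty basic open box $W \subseteq X^\omega$ that constrains only coordinates $j < N$, a sequence $(x_j)_{j<\omega}$ satisfying $(\ast)$. Let $D = \{d_0, d_1, \ldots\}$ be a countable dense subset of $X$, which exists by separability. For $j < N$ pick $x_j$ in the (nonempty open) $j$th factor of $W$; for $j \geq N$ let $(x_j)_{j \geq N}$ enumerate $D$ with each $d_k$ repeated infinitely often (a standard dovetailing). Then $(x_j) \in W$, and for any nonempty open $V$ we have $d_k \in V$ for some $k$ since $D$ is dense, whence $\{j : x_j = d_k\} \subseteq \{j : x_j \in V\}$ is infinite; thus $(\ast)$ holds and $(x_j) \in S_{\mc{W}}$. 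This proves density in Case 2.

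The only delicate point is the bookkeeping in Case 2: correctly translating membership in $S_{\mc{W}}$ into the hitting condition $(\ast)$ and using the permutation $g$ to match coordinates of $U$. Everything else reduces to downward L\"owenheim--Skolem together with the model correspondence of Lemma \ref{lem:ModelsW} in Case 1, and to separability in Case 2.
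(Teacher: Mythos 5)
Your proof is correct and takes essentially the same approach as the paper's: Case 1 via downward L\"owenheim--Skolem with the witnessing tuple placed at the first $n$ indices of the enumeration, and Case 2 by choosing a point of the given basic open set that lists a countable dense set with each element repeated infinitely often, then verifying it lies in $S_{\mc{W}}$. The only cosmetic difference is that in Case 2 you invoke a nontrivial permutation $g$, whereas the paper picks the new elements of $B$ in increasing order so that $g$ can be taken to be the identity.
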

\begin{proof}
Suppose first that $T$ is a countable first-order theory.  Then a basic open set $O \subseteq S_\omega$ is the set of all $\omega$-types of $T$ containing some particular formula $\phi$.  If $O \neq \emptyset$ then there is a model $\mc{M} \models T$ containing a tuple $\vec{a}$ satisfying $\phi$, and by L\"owenheim-Skolem we may assume $\mc{M}$ is countable.  If $\sigma$ is the type of an enumeration of $\mc{M}$ in the appropriate order (so that the elements of $\vec{a}$ have the same indices as the variables appearing in $\phi$), then $\sigma \in O \cap S_{\mc{W}}$.

Now suppose that $X$ is a space and $S = S_X$.  Let $O \subseteq X^{\omega}$ be a basic open set.  Let $D \subseteq X$ be a countable dense set, and let $\sigma \in O$ be such that every element of $D$ is listed in $\sigma$ infinitely many times.  
We show that $\sigma \in \mc{W}$.  We are given $n < \omega$, $m \geq n$, $A = \{a_0 < \ldots < a_{n-1}\} \in [\omega]^n$, and a non-empty basic open set $U \subseteq (S_{\iota_{n, m}}^{-1})(M_{\sigma}(A))$.  Unravelling the definitions, this means that there are open sets $V_n, V_{n+1}, \ldots, V_{m-1} \subseteq X$ such that elements in $U$ are exactly those sequences of the form $(x_0, \ldots, x_{m-1})$ where $x_i = \sigma(a_i)$ for $i < n$ and $x_i \in V_i$ for $n \leq i < m$.  Choose $B = \{b_0 < \ldots < b_{m-1}\}$ such that $b_i = a_i$ for $i < n$, and such that $\sigma(b_i) \in V_i$ for $n \leq i < m$ (this is possible by our choice of $\sigma$).  Then $M_\sigma(B) = (\sigma(b_0), \ldots, \sigma(b_{m-1})) \in U$ (so also $(Sg)(M_\sigma(B)) \in U$ where $g : m\to m$ is the identity function).  Therefore $\sigma \in S_\mc{W}$, and hence $S_\mc{W} \cap O \neq \emptyset$.
\end{proof}

The topological content of the omitting types theorem for first-order logic is captured by the following:

\begin{thm}\label{thm:OTTBaire}
Let $S$ be a type-space functor with enough countable models.  If every closed subspace of $S_{\mc{W}}$ is non-meagre in itself then $S$ has the classical omitting types property.
\end{thm}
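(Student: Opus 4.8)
The plan is to run the argument of Proposition~\ref{prop:StrongImpliesClassical} almost verbatim, using the Baire-category hypothesis in place of the strong omitting types property to extract the required model. Fix a non-empty closed $T \subseteq S_0$ and a sequence $(E_j)_{j<\omega}$ with each $E_j$ meagre in $(S\iota_{0,j})^{-1}(T)$, and set $A_\omega = (S\iota_{0,\omega})^{-1}(T)$. Since $S\iota_{0,\omega}$ is continuous and $T$ is closed, $A_\omega$ is closed in $S_\omega$, so $S_{\mc{W}} \cap A_\omega$ is a closed subspace of $S_{\mc{W}}$; this is the space to which I intend to apply the hypothesis.

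Next I would import, unchanged, the meagre set produced in the proof of Proposition~\ref{prop:StrongImpliesClassical}: for each $j<\omega$ and $\mb{i} \in \omega^j$ form $C_{j,\mb{i}} = (Sf_{j,\mb{i}})^{-1}(E_j)$ and let $F = \bigcup_{j<\omega}\bigcup_{\mb{i}\in\omega^j} C_{j,\mb{i}}$. As established there, each $Sf_{j,\mb{i}}$ is continuous, open and surjective, so that preimages of nowhere dense sets are nowhere dense and hence $F$ is meagre in $S_{\mc{W}} \cap A_\omega$. Now the hypothesis applies: being a closed subspace of $S_{\mc{W}}$, the space $S_{\mc{W}} \cap A_\omega$ is non-meagre in itself, so the meagre subset $F$ cannot exhaust it and $(S_{\mc{W}} \cap A_\omega) \setminus F \neq \emptyset$. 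Any $\sigma$ in this difference corresponds, via Lemma~\ref{lem:ModelsW}, to a countable model $M_\sigma$; that $\sigma \in A_\omega$ gives $M_\sigma \models T$, and that $\sigma \notin F$ gives, by the same unwinding of Definitions~\ref{def:MSigma} and~\ref{def:Satisfaction} as in Proposition~\ref{prop:StrongImpliesClassical}, that $M_\sigma$ omits every $E_j$.

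The step that genuinely needs care -- and that I expect to be the main obstacle -- is the non-emptiness of $S_{\mc{W}} \cap A_\omega$, since the hypothesis only yields a point of $(S_{\mc{W}} \cap A_\omega)\setminus F$ once the ambient space is known to be non-empty (the empty space being meagre in itself, so that ``non-meagre in itself'' must be read as applying to non-empty closed subspaces). Non-emptiness here is exactly the assertion that the non-empty closed $T$ is realized by some countable model, an abstract form of the downward L\"owenheim--Skolem theorem; in particular it presupposes that each point of $T$ extends to a point of $S_\omega$, i.e.\ surjectivity of the projection $S\iota_{0,\omega}$, and it is at this point that \emph{enough countable models}, the openness of $S\iota_{0,\omega}$, and the weak amalgamation property must be brought to bear. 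In the two motivating families this is automatic -- via classical L\"owenheim--Skolem when $S = S(T)$ for a countable first-order theory, and trivially when $S = S_X$ since then $S_0$ is a single point -- so I would dispatch those cases first and then argue the general statement by a weak-amalgamation-driven genericity construction, building a sequence in $S_\omega$ lying over a chosen point of $T$ whose generic position forces membership in $S_{\mc{W}}$, in the spirit of the density argument of Lemma~\ref{lem:EnoughCountableModels}.
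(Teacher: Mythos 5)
Your core argument is the paper's own: the same sets $C_{j,\mb{i}} = (Sf_{j,\mb{i}})^{-1}(E_j)$ and $F$, the same application of the non-meagreness hypothesis to the closed subspace $S_{\mc{W}} \cap A_\omega$ of $S_{\mc{W}}$, and the same unwinding of Definitions \ref{def:MSigma} and \ref{def:Satisfaction} at the end. But there is a genuine gap at precisely the one point where this theorem requires more than Proposition \ref{prop:StrongImpliesClassical}. You claim that because each $Sf_{j,\mb{i}}$ is continuous, open and surjective, ``preimages of nowhere dense sets are nowhere dense and hence $F$ is meagre in $S_{\mc{W}} \cap A_\omega$.'' The preimage argument gives meagreness of each $C_{j,\mb{i}}$ in $A_\omega$, which is all that Proposition \ref{prop:StrongImpliesClassical} actually needs, since the strong omitting types property is applied there with $C = A_\omega$ and $E = F$. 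Here, however, the hypothesis speaks only of closed subspaces of $S_{\mc{W}}$, so you need $F \cap S_{\mc{W}} \cap A_\omega$ to be meagre \emph{in the subspace} $S_{\mc{W}} \cap A_\omega$, and that is not a consequence of meagreness in $A_\omega$: restrictions of open maps to arbitrary subspaces need not be open, and the trace of a nowhere dense set on a non-dense subspace can fail to be nowhere dense (the trace of the nowhere dense set $\{0\} \subseteq \mathbb{R}$ on the subspace $\{0\}$ is the whole subspace). What repairs this is density of $S_{\mc{W}} \cap A_\omega$ in $A_\omega$ --- traces of nowhere dense sets on dense subspaces stay nowhere dense --- and this is exactly where the paper invokes enough countable models, with the explicit remark that this ``was not necessary in Proposition \ref{prop:StrongImpliesClassical}''. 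Your proposal never uses enough countable models at this step, so as written the meagreness claim is unjustified.

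By contrast, the place where you do spend that hypothesis --- non-emptiness of $S_{\mc{W}} \cap A_\omega$ --- is a legitimate concern that the paper passes over in silence (the hypothesis produces a point of $(S_{\mc{W}} \cap A_\omega)\setminus F$ only once that space is known to be non-empty), but your treatment of it is a plan rather than a proof: the ``weak-amalgamation-driven genericity construction'' is named, not carried out, so your argument is incomplete there by your own account. Note that your two loose ends collapse into one: both the meagreness step and the non-emptiness step would follow from a single lemma asserting that $S_{\mc{W}} \cap A_\omega$ is dense in $A_\omega$ whenever $A_\omega = (S\iota_{0,\omega})^{-1}(T) \neq \emptyset$. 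Density of $S_{\mc{W}}$ in $S_\omega$ does not formally yield this for the closed set $A_\omega$, so that is the statement you should isolate and prove; with it in hand, your first two paragraphs reproduce the paper's proof exactly and the construction promised in your third paragraph becomes unnecessary.
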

\begin{proof}
The proof is nearly identical to the proof of Proposition \ref{prop:StrongImpliesClassical}.  As in that proof, we fix $T \subseteq S_0$ closed, and for $\alpha \leq \omega$ let $A_\alpha = (S_{\iota_{0, \alpha}})^{-1}(T)$.  For each $j < \omega$, let $E_j \subseteq A_j$ be meagre.  For each $j < \omega$ and $\mb{i} \in \omega^j$, define $f_{j, \mb{i}} : j \to \omega$ by $f_{j, \mb{i}}(k) = \mb{i}_k$, and define $C_{j, \mb{i}} = (S f_{j, \mb{i}})^{-1}(E_j)$; then each $C_{j, \mb{i}}$ is meagre in $S_{\mc{W}} \cap A_\omega$ (here we use that $S$ has enough countable models, which was not necessary in Proposition \ref{prop:StrongImpliesClassical}).  Define
\[F = \bigcup_{j < \omega}\bigcup_{\mb{i} \in \omega^j}C_{j, \mb{i}}.\]
Then $F$ is meagre in $S_{\mc{W}} \cap A_\omega$, and since $S_{\mc{W}} \cap A_{\omega}$ is non-meagre in itself by hypothesis, we can find $M \in (S_{\mc{W}} \cap A_{\omega}) \setminus F$.  This $M$ satisfies $T$ and omits each $E_j$.
\end{proof}

To characterize the strong omitting types property topologically we will need some terminology.  A topological space is \emph{completely Baire} if every closed subspace is Baire, and is \emph{completely non-meagre} if every closed subspace is non-meagre in itself.  

Hurewicz \cite{Hurewicz1928} proved that a metrizable space is completely Baire if and only if the space does not include a closed copy of $\mathbb{Q}$.  Since $\mathbb{Q}$ is meagre in itself, it follows immediately that a metrizable space is completely Baire if and only if it is completely non-meagre.  For this latter claim much weaker assumptions than metrizability are sufficient.  The one we will use is the following.

\begin{defn}
A topological space is \emph{quasi-regular} if each open set includes the closure of an open set.  A space is \emph{completely quasi-regular} if each closed subspace is quasi-regular.
\end{defn}

Quasi-regularity is commonly required to prove results about Baire spaces (see e.g. \cite{Oxtoby1957}).

\begin{lem}\label{lem:BaireNonmeagre}
A completely quasi-regular space is completely Baire if and only if it is completely nonmeagre.
\end{lem}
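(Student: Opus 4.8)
The plan is to prove the two implications separately, treating the forward direction as essentially formal and concentrating the real work on the converse. Throughout I only need to consider nonempty closed subspaces, since the empty space is vacuously Baire and requires no attention.

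For the direction ``completely Baire implies completely nonmeagre'', I would first record the standalone observation that every nonempty Baire space is nonmeagre in itself. Indeed, if a nonempty space $X$ were meagre in itself, say $X = \bigcup_n N_n$ with each $N_n$ nowhere dense, then the sets $X \setminus \overline{N_n}$ would be dense open with empty intersection, contradicting the Baire property. Applying this to each nonempty closed subspace $F$ of a completely Baire space gives the conclusion. This direction uses neither quasi-regularity nor the full strength of ``completely'' beyond the ability to pass to closed subspaces.

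For the converse, ``completely nonmeagre implies completely Baire'', I would argue by contradiction. Fix a nonempty closed subspace $F$ and suppose $F$ is not Baire. Using the standard equivalence that a space fails to be Baire exactly when some nonempty open set is meagre, I obtain a nonempty relatively open $W \subseteq F$ together with a decomposition $W = \bigcup_n M_n$ in which each $M_n$ is nowhere dense in $F$. The key move is then to pass to the closed set $G = \overline{W}$ (closure taken in $F$), which is a nonempty closed subspace of the whole space, and to show that $G$ is meagre in itself; this contradicts the completely nonmeagre hypothesis and finishes the proof.

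The hard part will be the sub-claim that $G$ is meagre in itself, which is point-set bookkeeping about how nowhere density transfers to the subspace $G$. I would write $G = (G \setminus W) \cup \bigcup_n M_n$. Since $W$ is a dense open subset of $G$ by construction, its complement $G \setminus W$ is closed and nowhere dense in $G$. For the remaining pieces, I would check that each $M_n$ stays nowhere dense in $G$: if some nonempty relatively open $V = G \cap V'$, with $V'$ open in $F$, satisfied $V \subseteq \overline{M_n}$, then because $V'$ meets $G = \overline{W}$ it would meet the open set $W$, and $V' \cap W \subseteq V \subseteq \overline{M_n}$ would be a nonempty open subset of $\overline{M_n}$, contradicting that $M_n$ is nowhere dense in $F$. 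Thus $G$ is a countable union of sets nowhere dense in $G$, as required. I expect this transfer step to be the only delicate point; in particular I would want to check carefully whether quasi-regularity is genuinely invoked here or only serves to streamline the passage to $G$, since the computation above appears to go through for an arbitrary $G$.
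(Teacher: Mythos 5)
Your proof is correct, but it takes a genuinely different route through the hard direction, and your closing suspicion is justified: quasi-regularity is never actually used in your argument. The paper's proof of (completely nonmeagre $\Rightarrow$ completely Baire) fixes a closed subspace $F$ and dense open sets $\{U_n\}_{n<\omega}$ in $F$, supposes $\bigcap_{n<\omega} U_n$ misses some nonempty open $V \subseteq F$, and then invokes quasi-regularity to choose an open $W$ with $\overline{W} \subseteq V$; since each $U_n \cap \overline{W}$ is dense open in $\overline{W}$ and the closed set $\overline{W}$ is nonmeagre in itself, $\overline{W} \cap \bigcap_{n<\omega} U_n \neq \emptyset$, which is a contradiction precisely \emph{because} the point produced lies in $V$. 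Quasi-regularity is exactly what pins that point inside $V$: applying nonmeagreness to $\overline{V}$ instead would only yield a point that might lie on the boundary $\overline{V} \setminus V$, contradicting nothing. Your argument dissolves this difficulty rather than working around it: instead of hunting for a point of $\bigcap_{n<\omega} U_n$ inside the meagre open set, you show outright that the closed set $G = \overline{W}$ is meagre in itself, by adjoining the boundary $G \setminus W$ (closed in $G$ with dense complement, hence nowhere dense in $G$) to the transferred decomposition $\bigcup_n M_n$; and your verification that each $M_n$ stays nowhere dense in $G$ (any $F$-open $V'$ meeting $G = \overline{W}$ must meet $W$, and $V' \cap W \subseteq \overline{M_n}$ would contradict nowhere density of $M_n$ in $F$) is sound, using only that $G$ is closed in $F$. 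The payoff is a strictly stronger result: completely nonmeagre and completely Baire are equivalent for \emph{arbitrary} topological spaces, so the quasi-regularity hypothesis in the lemma is superfluous; and since Theorem \ref{thm:StrongOTT} uses quasi-regularity only through this lemma (in the step $(2) \to (3)$), your argument would allow that hypothesis to be dropped there as well. What the paper's approach buys in exchange is brevity: granted quasi-regularity, its contradiction is immediate from the standard characterization of Baire spaces, whereas your proof requires the extra (but elementary) bookkeeping of the transfer step.
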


\begin{proof}
That completely Baire implies completely nonmeagre is immediate.  For the other direction, let $F$ be a closed subspace of a completely nonmeagre, completely quasi-regular space $X$.  Let $\{U_n\}_{n < \omega}$ be a collection of dense open subspaces of $F$.  If $\bigcap_{n < \omega} U_n$ were not dense in $F$, then there would be a $V \subseteq F$, $V$ open in $F$, such that $V \cap \bigcap_{n < \omega}U_n = \emptyset$.  Let $W$ be open in $V$ with $\overline{W} \subseteq V$.  Then $\overline{W} \cap \bigcap_{n<\omega}U_n \neq \emptyset$, because $\overline{W}$ is nonmeagre.  This contradicts $V \cap \bigcap_{n < \omega}U_n = \emptyset$.
\end{proof}

We note that regularity of type spaces can serve as a kind of weak negation.  For example, in continuous first-order logic for metric structures one does not have a classical negation, but the connective $1-x$ acts as an approximate negation, and closure under that connective is also the essential ingredient in the proof that the type spaces in continuous logic are regular.  See \cite{Caicedo1995} for more about the role of topological separation axioms in abstract model theory.  In our context we are assuming even less than regularity, though it is not clear exactly how to translate quasi-regularity into logical terms, owing to the difficulty of computing closures in the type spaces of traditional logics.

\begin{thm}\label{thm:StrongOTT}
Let $S$ be a type-space functor with enough countable models, and such that $S_{\mc{W}}$ is quasi-regular.  Then the following are equivalent:
\begin{enumerate}
\item{
$S$ has the strong omitting types property.
}
\item{
$S_{\mc{W}}$ is completely non-meagre.
}
\item{
$S_{\mc{W}}$ is completely Baire.
}
\end{enumerate}
\end{thm}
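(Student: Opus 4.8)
The plan is to establish $(1)\Leftrightarrow(2)$ by unwinding the definitions through the model/point dictionary of Lemma~\ref{lem:ModelsW}, and to obtain $(2)\Leftrightarrow(3)$ as a direct application of Lemma~\ref{lem:BaireNonmeagre}, which is where the quasi-regularity hypothesis is spent. The key preliminary observation is that the closed subspaces of $S_{\mc{W}}$ are exactly the sets $S_{\mc{W}}\cap C$ for $C\subseteq S_\omega$ closed: given a closed $D\subseteq S_{\mc{W}}$ one has $D=S_{\mc{W}}\cap\overline{D}^{S_\omega}$, so $C=\overline{D}^{S_\omega}$ works. Under the identification $M\mapsto\sigma_M$ of Lemma~\ref{lem:ModelsW}, the models whose $\omega$-type lies in a closed $C$ are precisely the points of $S_{\mc{W}}\cap C$, and such a model omits a set $E$ exactly when the corresponding point avoids $E$. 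Consequently the strong omitting types property, read through this dictionary, becomes the assertion that for every closed $C$ and every $E$ meagre in the space of models $S_{\mc{W}}\cap C$, the difference $(S_{\mc{W}}\cap C)\setminus E$ is nonempty. Here the assumption of enough countable models (density of $S_{\mc{W}}$ in $S_\omega$) is what ensures these subspaces are populated and that meagreness is being measured in the right space.

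Granting this reformulation, $(2)\Rightarrow(1)$ is immediate: if $S_{\mc{W}}$ is completely non-meagre then each closed subspace $S_{\mc{W}}\cap C$ is non-meagre in itself, so deleting a meagre $E$ leaves a nonempty set, every point of which is a model in $C$ omitting $E$. For the converse $(1)\Rightarrow(2)$, let $D$ be a nonempty closed subspace of $S_{\mc{W}}$ and write $D=S_{\mc{W}}\cap C$ with $C=\overline{D}^{S_\omega}$. If $D$ were meagre in itself, then $E=D$ is a meagre subset of $S_{\mc{W}}\cap C$, and the strong omitting types property would produce a model in $(S_{\mc{W}}\cap C)\setminus E=D\setminus D=\emptyset$, which is absurd. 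Hence every closed subspace of $S_{\mc{W}}$ is non-meagre in itself, i.e.\ $(2)$ holds.

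It remains to prove $(2)\Leftrightarrow(3)$, and this is exactly Lemma~\ref{lem:BaireNonmeagre} applied to the space $S_{\mc{W}}$: under quasi-regularity, $S_{\mc{W}}$ is completely Baire if and only if it is completely non-meagre, the forward direction needing no hypothesis and the reverse being where quasi-regularity, via the availability of an open set whose closure sits inside a prescribed open set, is used. I expect the main obstacle to be the careful matching of the relevant notions of meagreness — in $S_\omega$, in $C$, and in the subspace of models $S_{\mc{W}}\cap C$ — so that the meagre set to be omitted is interpreted relative to the space of models; once density of $S_{\mc{W}}$ pins this down, the rest of $(1)\Leftrightarrow(2)$ is a routine translation via Lemma~\ref{lem:ModelsW}, and $(2)\Leftrightarrow(3)$ is a direct appeal to Lemma~\ref{lem:BaireNonmeagre}.
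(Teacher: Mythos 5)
Your proposal is correct and is essentially the paper's own argument: the paper proves the cycle $(1)\to(2)\to(3)\to(1)$, where $(1)\to(2)$ is exactly your meagre-in-itself contradiction (taking the closed set $C$ to be the closure in $S_\omega$ of the given closed subspace of $S_{\mc{W}}$), $(2)\to(3)$ is the same appeal to Lemma \ref{lem:BaireNonmeagre} with quasi-regularity spent there, and its $(3)\to(1)$ step uses only that the closed trace $C\cap S_{\mc{W}}$ is non-meagre in itself, which is precisely your $(2)\Rightarrow(1)$ argument. Your rearrangement into $(1)\leftrightarrow(2)$ plus $(2)\leftrightarrow(3)$ is cosmetic, and your handling of the transfer of meagreness between $C$ and the space of models $C\cap S_{\mc{W}}$ via density matches the paper's own level of detail on that point.
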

\begin{proof}
$(1) \to (2)$: Suppose that $C' \subseteq S_{\mc{W}}$ is meagre in itself, and let $C$ be a closed subset of $S_\omega$ such that $C' = S_{\mc{W}} \cap C$.  Let $E_n$ be nowhere dense in $C'$, such that $C' = \bigcup_{n < \omega}E_n$.  Then each $E_n$ remains nowhere dense in $S_\omega$, so $C'$ is meagre in $S_\omega$.  The closed set $C$ and the meagre set $C'$ contradict the statement of the strong omitting types property, because any model in $C$ is in $C \cap S_{\mc{W}}$, and therefore does not omit $C'$.

$(2) \to (3)$: Apply Lemma \ref{lem:BaireNonmeagre}.

$(3) \to (1)$: Suppose that every closed subspace of $S$ is Baire, let $C \subseteq S_\omega$ be closed, and let $E \subseteq C$ be meagre.  Let $C' = C \cap S_{\mc{W}}$, and let $E' = E \cap S_{\mc{W}}$.  By the assumption that $S$ has enough countable models, $S_{\mc{W}}$ is dense in $S_\omega$, so $C'$ is closed in $S_{\mc{W}}$ and $E'$ is meagre in $C'$.  Since $S_{\mc{W}}$ is completely Baire, $C'$ is Baire, and hence $C' \setminus E' \neq \emptyset$.  Any element of $C' \setminus E'$ corresponds to a model of $C$ omitting $E'$ (just as in the proof of Proposition \ref{prop:StrongImpliesClassical}).
\end{proof}

It is usually easier to understand the topology of $S_\omega$ than the topology of $S_{\mc{W}}$.  In concrete situations it is therefore useful to have information about how $S_{\mc{W}}$ sits as a subspace of $S_\omega$.   Recall that the \emph{weight} of a topological space $X$ is the minimum cardinality of a base for the topology of $X$.  The following lemma is very useful, and is also immediate from the definition of $S_{\mc{W}}$.
 
\begin{lem}\label{lem:SizeOfW}
Let $S$ be a type-space functor, and for each $n < \omega$ let $w(S_n)$ be the weight of $S_n$.  Then $S_{\mc{W}}$ is the intersection of $\sum_n w(S_n)$-many open subsets of $S_\omega$.
\end{lem}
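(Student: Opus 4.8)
The plan is to read the open sets straight off the definition of $S_{\mc W}$ and then count them. First I fix, for each $m<\omega$, a base $\mc{B}_m$ for $S_m$ with $|\mc{B}_m|=w(S_m)$. The remark after the definition of $S_{\mc W}$ lets me restrict the set $U$ appearing there to the form $V\cap(S\iota_{n,m})^{-1}(\{M_\sigma(A)\})$ with $V\in\mc{B}_m$, so the natural index set is the family of tuples $(n,A,m,V)$ with $n<\omega$, $A\in[\omega]^n$, $m\geq n$, and $V\in\mc{B}_m$. Grouping by $m$, there are only countably many pairs $(n,A)$ and $w(S_m)$ choices of $V$ for each $m$, giving $\aleph_0\cdot w(S_m)$ tuples; summing over $m$ and using that a countably infinite sum of nonzero cardinals is at least $\aleph_0$ yields $\sum_m\aleph_0\cdot w(S_m)=\sum_n w(S_n)$. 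This settles the cardinality, which is the main quantitative content of the lemma.

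Next I pin down the open set attached to a tuple. The key structural observation is that every candidate witness automatically projects to $M_\sigma(A)$: if $B\supseteq A$ and $g$ is a permutation of $m$ with $a_i=b_{g(i)}$ for $i<n$, then $h:=f_B\circ g$ satisfies $h\restriction n=f_A$, whence $(Sg)(M_\sigma(B))=(Sh)(\sigma)$ and $(S\iota_{n,m})\big((Sh)(\sigma)\big)=(Sf_A)(\sigma)=M_\sigma(A)$. Thus every witness already lies in the fiber $(S\iota_{n,m})^{-1}(\{M_\sigma(A)\})$, so ``some witness lies in $V\cap\text{fiber}$'' is the same as ``some witness lies in $V$'', and since each map $Sh:S_\omega\to S_m$ is continuous, the set $Q_{n,A,m,V}:=\bigcup_h (Sh)^{-1}(V)$ — the union over injective $h:m\to\omega$ with $h\restriction n=f_A$ — is open in $S_\omega$. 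These $Q_{n,A,m,V}$ are the open sets I want to intersect.

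The inclusion $S_{\mc W}\subseteq Q_{n,A,m,V}$ follows straight from the defining property of $S_{\mc W}$ whenever $V$ meets the fiber over $M_\sigma(A)$, and conversely if $\sigma$ violates some instance of the defining condition — a nonempty open $U$ inside a fiber meets no witness — I want to produce a tuple $(n,A,m,V)$ with $\sigma\notin Q_{n,A,m,V}$. The hard part, and the step I expect to be the genuine obstacle, is the consistency guard ``$V$ meets the fiber $(S\iota_{n,m})^{-1}(\{M_\sigma(A)\})$''. This guard is an open condition on $\sigma$ (its failure is closed), so the crude requirement set $\big(S_\omega\setminus\{\sigma:V\text{ meets the fiber}\}\big)\cup Q_{n,A,m,V}$ need not be open, and worse, it need not even contain $S_{\mc W}$, since a model can sit on the boundary where $V$ merely touches the fiber without meeting it. My proposed remedy is to replace each base element $V$ by an enlargement $V^{+}:=V\cup(S\iota_{n,m})^{-1}(G)$ for a suitable open $G\subseteq S_n$ — here openness of the bonding map $S\iota_{n,m}$ is essential — chosen so that $V^{+}$ meets every fiber (making the guard vacuous, so that $Q_{n,A,m,V^{+}}$ is open and contains $S_{\mc W}$) while adding no new points over any $p\in S\iota_{n,m}(V)$ (so that a genuine witnessing gap over such a $p$ is preserved, and the offending $\sigma$ is still excluded). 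Carrying this out uniformly, keeping one enlargement per base element so the count stays $\sum_n w(S_n)$, requires controlling the boundary/specialization behaviour of $S\iota_{n,m}(V)$ in $S_n$; this is exactly the point where a separation property of the spaces $S_n$ (e.g. that points are closed) would make the construction clean, and pinning down the minimal hypothesis needed here is where I expect the real work to lie. Once this is done, $S_{\mc W}=\bigcap_{(n,A,m,V)}Q_{n,A,m,V^{+}}$ and the cardinality count above completes the proof.
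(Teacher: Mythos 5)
Your diagnosis of the obstruction is correct, and it is a real one: the paper offers no argument for this lemma (it is declared ``immediate from the definition of $S_{\mc{W}}$''), and once the sets $U$ are re-indexed by basic open $V \subseteq S_m$, each requirement reads ``if $V$ meets the fiber over $M_\sigma(A)$, then some witness lies in $V$'', whose guard is an open condition on $\sigma$ (since $(S\iota_{n,m})[V]$ is open and $Sf_A$ is continuous), so the requirement is the union of a closed set with your open set $Q_{n,A,m,V}$ and is in general not open. For instance, for $S_X$ with $X = \mathbb{R}$, $n=1$, $m=2$, $V = (0,1)^2$, the set $\{\sigma : \sigma(a_0) \in (0,1) \Rightarrow (\exists b \neq a_0)\ \sigma(b) \in (0,1)\}$ fails to be open at any $\sigma$ with $\sigma(a_0) = 0$ and all other coordinates equal to $5$. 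One side claim of yours is false, however: the crude set $\bigl(S_\omega \setminus \{\sigma : V \text{ meets the fiber}\}\bigr) \cup Q_{n,A,m,V}$ \emph{always} contains $S_{\mc{W}}$ (if the guard holds for $\sigma \in S_{\mc{W}}$, apply the defining property to $U = V \cap \text{fiber}$, which is nonempty and relatively open; if it fails, $\sigma$ lies in the first part). Non-openness is the only defect.

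The genuine gap is that your remedy cannot be carried out in the form you prescribe, under any separation hypothesis, so the ``minimal hypothesis'' you hope to pin down does not exist short of zero-dimensionality. Requiring $V^{+} = V \cup (S\iota_{n,m})^{-1}(G)$ to meet every fiber over a point of the form $M_\sigma(A)$ with $\sigma \in S_{\mc{W}}$, while adding no points over $(S\iota_{n,m})[V]$, forces $G$ to contain all such points outside $(S\iota_{n,m})[V]$ and to be disjoint from $(S\iota_{n,m})[V]$. In $S_{\mathbb{R}}$ every point of $S_n = \mathbb{R}^n$ is of the form $M_\sigma(A)$ with $\sigma \in S_{\mc{W}}$, so $G$ would have to equal $\mathbb{R}^n \setminus (S\iota_{n,m})[V]$ exactly, which for $V = (0,1)^m$ is not open; what your form actually needs is that $(S\iota_{n,m})[V]$ be \emph{clopen}, i.e.\ the zero-dimensionality of the first-order setting (where $G = [\neg\exists \bar{y}\,\phi]$ works), and $T_1$-type axioms are irrelevant. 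Since the lemma \emph{is} true for $S_{\mathbb{R}}$, the repair must take a different shape. Two that work: (i) use a fiber-saturated hull not of your form --- for product functors, $H_V = X^n \times V_n \times \cdots \times V_{m-1}$ is open, meets every fiber, and meets each fiber over $(S\iota_{n,m})[V]$ inside $V$, so $\bigcup_h (Sh)^{-1}(H_V)$ is open, contains $S_{\mc{W}}$, and excludes every $\sigma$ whose failure is witnessed at $(n,A,m,V)$, with one hull per $V$ so the count is unchanged; or (ii) abandon the one-open-set-per-requirement format: the bad set for $(n,A,m,V)$ is the intersection of an open set with a closed set, hence $F_\sigma$ whenever $S_\omega$ is perfectly normal (in particular metrizable, which covers every application of the lemma in the paper), so $S_\omega \setminus S_{\mc{W}}$ is a union of $\sum_n w(S_n)$ closed sets. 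As written, your proof is incomplete at exactly the step you flagged, and no completion of the kind you propose can exist.
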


\begin{ex}
Many \emph{Omitting Types Theorems} in the literature can be easily derived from Theorem \ref{thm:OTTBaire}, after translating our topological statement into model-theoretic terminology.  We present here just a few examples.  The spaces in the following examples are \emph{\v{C}ech-complete}; a space $X$ is \emph{\v{C}ech-complete} if it is a $G_\delta$ in some (equivalently, every) compactification of $X$.  Completely metrizable spaces are \v{C}ech-complete, as are locally compact Hausdorff spaces, and every \v{C}ech-complete space is Baire.
\begin{enumerate}
\item{
Let $T$ be a first-order theory in a countable language, and let $S$ be the associated type-space functor (as described in Section \ref{sec:TypeSpace:FirstOrder}).  In this context each $S_n$ is a second countable space, so $S_{\mc{W}}$ is a dense $G_\delta$ in $S_\omega$ by Lemmas \ref{lem:SizeOfW} and \ref{lem:EnoughCountableModels}.  By the compactness theorem, $S_\omega$ is a compact space, and is therefore \v{C}ech-complete.  \v{C}ech-completeness is inherited by dense $G_\delta$ subspaces, and by closed subspaces, so it follows that every closed subspace of $S_{\mc{W}}$ is \v{C}ech-complete, and hence Baire.  The Omitting Types Theorem for first-order logic then follows from Theorem \ref{thm:OTTBaire}, together with the observation that a type $p \in S_n(T)$ is principal if and only if $p$ is an isolated point of $S_n(T)$ (see \cite[Section 4.2]{Marker2002}).
}
\item{
The above discussion also works more generally, if $T$ is a theory in a countable fragment of $L_{\omega_1, \omega}$.  In this case $S_\omega$ is not compact, but it is Polish, and so is still \v{C}ech-complete.  We obtain the Omitting Types Theorem for countable fragments of $L_{\omega_1, \omega}$, originally due to Keisler \cite{Keisler1971}.  This proof of omitting types for countable fragments of $L_{\omega_1, \omega}$ is fundamentally the same as the one given by Morley \cite{Morley1974}.
}
\item{
Similarly, if $T$ is a theory in a countable fragment of the logic $L_{\omega_1, \omega}$ for metric structures, then $S_\omega$ is \v{C}ech-complete.  Translating Theorem \ref{thm:OTTBaire} into model-theoretic terminology gives the Omitting Types Theorem for (not necessarily complete) metric structures from \cite{Eagle2014}.
}
\end{enumerate}
\end{ex}

Not every omitting types theorem from the literature is a direct consequence of the topological version presented here.  Notably, the omitting types theorem for continuous logic \cite{BenYaacov2008a}, which requires that the models omitting the given types be based on \emph{complete} metric spaces, does not directly follow from our results; see \cite{Farah2014b} for a discussion of the subtleties that arise in omitting types in complete metric structures.  We note also that our topological approach to obtaining omitting types theorems bears some resemblance to Keisler's \cite{Keisler1973}, which develops both omitting types and set-theoretic forcing as a result of a more general notion of forcing that is closely related to Baire category.

\subsection{A game version of omitting types}\label{sec:Games}
The Banach-Mazur game on a topological space $X$ is a game played between two players, called EMPTY and NONEMPTY, as follows.  The players alternate choosing open sets $O_0 \supseteq O_1 \supseteq \cdots$, with EMPTY choosing first.  The player NONEMPTY wins if $\bigcap_{n < \omega}O_n \neq \emptyset$, otherwise EMPTY wins.  The connection between the Banach-Mazur game and Baire spaces is the following well-known result.

\begin{thm}[see e.g. \cite{Oxtoby1957}]
A topological space $X$ is a Baire space if and only if EMPTY does not have a winning strategy in the Banach-Mazur game.
\end{thm}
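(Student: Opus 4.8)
The plan is to prove both directions of the equivalence between $X$ being a Baire space and EMPTY lacking a winning strategy, since this is a classical characterization whose proof I would reconstruct rather than cite. The key conceptual device is to associate to any strategy for EMPTY a tree of open sets that records all possible plays, and to extract from a failure of the Baire property a branch of that tree along which NONEMPTY can defeat the strategy.

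For the direction that assumes $X$ is Baire and concludes EMPTY has no winning strategy, I would argue by contradiction. Suppose $\tau$ is a winning strategy for EMPTY. I would build, by a tree-like recursion, a family of open sets arising as responses of $\tau$ to legal plays by NONEMPTY, choosing NONEMPTY's moves to form a maximal pairwise-disjoint (antichain) family at each level so that the union of the sets at each level is dense in the open set being subdivided. The standard construction produces, for each level $n$, an open set $G_n$ that is dense (obtained as the union of a maximal family of EMPTY's responses at that level), with the property that any point in $\bigcap_n G_n$ lies on some play consistent with $\tau$ whose intersection is therefore empty. Baireness forces $\bigcap_n G_n$ to be dense and in particular nonempty, and tracing a point back through the tree yields a single play against $\tau$ with nonempty intersection, contradicting that $\tau$ wins for EMPTY.

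For the converse, assuming EMPTY has no winning strategy and aiming to show $X$ is Baire, I would argue the contrapositive: if $X$ is not Baire, I would manufacture a winning strategy for EMPTY. Non-Baireness gives a countable family of dense open sets $D_n$ whose intersection misses some nonempty open set $V$. The strategy has EMPTY open with (a subset of) $V$, and then at stage $n$ EMPTY shrinks NONEMPTY's previous choice into $D_n$, which is possible precisely because each $D_n$ is dense and open so meets every nonempty open set in an open set. Following this strategy, the nested sequence is eventually contained in $\bigcap_n D_n \cap V = \emptyset$, so the intersection is empty and EMPTY wins every play; hence EMPTY has a winning strategy.

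The main obstacle is the bookkeeping in the first direction: organizing the tree of plays consistent with $\tau$ so that the levelwise unions are genuinely dense and so that a point of the dense $G_\delta$ can be lifted to a single coherent branch. This requires choosing at each node a maximal antichain of NONEMPTY-moves (invoking Zorn's lemma or a well-ordering) and verifying that density propagates up the tree; once that combinatorial framework is in place, the contradiction with Baireness is immediate. The converse direction is comparatively routine, relying only on the defining property of dense open sets.
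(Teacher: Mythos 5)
Your proposal is correct and coincides with the classical argument that the paper simply cites (Oxtoby) rather than proves: the maximal-antichain tree of plays against a strategy for EMPTY in one direction, and the direct shrinking strategy built from dense open sets $D_n$ witnessing non-Baireness in the other. The only detail left implicit is that your dense open sets $G_n$ are dense only in EMPTY's first move $O_0$, not in all of $X$; this is repaired either by recalling that open subspaces of Baire spaces are Baire, or by replacing each $G_n$ with $G_n \cup (X \setminus \overline{O_0})$ before invoking Baireness of $X$.
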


There are examples of spaces $X$ for which the Banach-Mazur game is not determined \cite{Oxtoby1957}, so asserting that NONEMPTY has a winning strategy is strictly stronger than asserting that EMPTY does not have one.  This stronger property was introduced by Choquet \cite{Choquet1969} who called it \textbf{weakly $\alpha$-favourable}.  Weak $\alpha$-favourability was further investigated by H. E. White \cite{White1975}, who, among other results, proved it was preserved by topological products –- even box products, unlike the usual Baire Category Theorem \cite{Fleissner1978}.

In light of Theorems \ref{thm:OTTBaire} and \ref{thm:StrongOTT} it is natural to ask how the Omitting Types Theorem is strengthened by using weakly $\alpha$-favourable spaces instead of Baire spaces.  By analogy to the case of first-order logic, we will refer to a closed subset of $S_\omega$ as a \emph{partial $\omega$-type}.  It is then convenient to state the Banach-Mazur game in dual form.

\begin{defn}
Let $S$ be a type-space functor, and let $C \subseteq S_{\omega}$ be a partial $\omega$-type.  The \emph{omitting types game} on $C$ is played by two players, OMIT and REALIZE, as follows.  The players alternate picking partial $\omega$-types $F_0 \subseteq F_1 \subseteq \cdots$, with REALIZE playing first, and with each $F_i$ omissible in a model realizing $C$.  The player OMIT wins if $\bigcup_{n < \omega} F_n$ is omissible in a model realizing $C$, otherwise REALIZE wins.

We say that $S$ has the \defined{game omitting types property} if OMIT has a winning strategy in the omitting types game on $C$, for every $C$.
\end{defn}

We call a space $X$ \emph{completely weakly $\alpha$-favourable} if every closed subspace of $X$ is weakly $\alpha$-favourable.  The definition of the omitting types game immediately gives the following statement, analogous to Theorem \ref{thm:StrongOTT}.

\begin{thm}\label{thm:GameOTT}
Let $S$ be a type-space functor with enough countable models.  The following are equivalent:
\begin{enumerate}
\item{
$S$ has the game omitting types property.
}
\item{
$S_{\mc{W}}$ is completely weakly $\alpha$-favourable.
}
\end{enumerate}
\end{thm}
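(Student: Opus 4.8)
The plan is to show that, for each partial $\omega$-type $C$, the omitting types game on $C$ is just a relabelling of the Banach--Mazur game on the closed subspace $Y := C \cap S_{\mc{W}}$ of $S_{\mc{W}}$, under which OMIT plays the role of NONEMPTY and REALIZE the role of EMPTY. Granting this, OMIT has a winning strategy on $C$ if and only if NONEMPTY has a winning strategy in the Banach--Mazur game on $Y$, i.e. if and only if $Y$ is weakly $\alpha$-favourable. Since every closed subspace $K$ of $S_{\mc{W}}$ equals $C \cap S_{\mc{W}}$ for $C$ the closure of $K$ in $S_\omega$, quantifying over all partial $\omega$-types $C$ then yields the equivalence of (1) and (2).

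The dictionary between the two games is the correspondence $F \leftrightarrow O := Y \setminus F$ between partial $\omega$-types $F$ (closed subsets of $S_\omega$) and open subsets $O$ of $Y$. Using Lemma \ref{lem:ModelsW} to identify the models realizing $C$ with the points of $Y$, a partial $\omega$-type $F$ is omissible in a model realizing $C$ exactly when $(C \cap S_{\mc{W}}) \setminus F = Y \setminus F$ is nonempty, so the admissible moves in the two games match. A chain $F_0 \subseteq F_1 \subseteq \cdots$ corresponds to a decreasing chain $O_0 \supseteq O_1 \supseteq \cdots$ of nonempty open sets, and OMIT's winning condition becomes $\bigcap_{n} O_n = Y \setminus \bigcup_n F_n \neq \emptyset$, which is precisely NONEMPTY's winning condition. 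Thus a play of one game is, through this dictionary, a play of the other with the winner preserved.

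First I would make this precise at the level of strategies. The only point requiring care is that $F \mapsto Y \setminus F$ is not injective, so to transport a winning strategy for NONEMPTY into one for OMIT (and conversely) one must lift an open reply $O' \subseteq O$ in $Y$ back to a closed reply $F'$ in $S_\omega$ that contains the previous closed move $F$ and satisfies $Y \setminus F' = O'$. I would do this by setting $F' = F \cup \overline{Y \setminus O'}$, where the closure is taken in $S_\omega$: this set is closed and contains $F$, and since $Y \setminus O'$ is already closed in $Y$ one has $\overline{Y \setminus O'} \cap Y = Y \setminus O'$, whence $F' \cap Y = (Y \setminus O) \cup (Y \setminus O') = Y \setminus O'$ and therefore $Y \setminus F' = O'$ as required. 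Feeding either player's strategy through this dictionary then produces a strategy for the corresponding player in the other game that wins every play, which is exactly what is needed.

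The main obstacle I anticipate is essentially bookkeeping: checking that the lift $F' = F \cup \overline{Y \setminus O'}$ keeps the move chains both monotone and admissible at every stage, so that the translated objects are genuinely legal strategies and preserve the winner, and that (since $Y$ need not be closed in $S_\omega$) the identity $\overline{Y \setminus O'} \cap Y = Y \setminus O'$ is applied correctly via the subspace-closure formula. Once this lifting is in place the equivalence is immediate from the characterization of weak $\alpha$-favourability as the existence of a winning strategy for NONEMPTY in the Banach--Mazur game.
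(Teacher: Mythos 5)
Your proof is correct and takes essentially the same approach as the paper: the paper offers no argument beyond observing that the theorem is immediate from the definition of the omitting types game, which is precisely the dual (complementation) form of the Banach--Mazur game on the closed subspaces $C \cap S_{\mc{W}}$ of $S_{\mc{W}}$. Your explicit dictionary $F \leftrightarrow Y \setminus F$, with the lift $F' = F \cup \overline{Y \setminus O'}$ to handle monotonicity and the non-injectivity of complementation, simply fills in the routine details the paper leaves to the reader.
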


We immediately obtain the following game version of the omitting types theorem for countable fragments of $L_{\omega_1, \omega}$, which to the best of our knowledge has not been explicitly stated elsewhere.

\begin{thm}
Let $T$ be a theory in a countable fragment of $L_{\omega_1, \omega}$.  Two players OMIT and REALIZE play the following game: REALIZE plays first, and the players alternate picking a sequence of partial $\omega$-types $\Sigma_0 \supseteq \Sigma_1 \supseteq \ldots$ (the inclusions being as sets of formulas), such that each $\Sigma_i$ is omissible in a model of $T$.  Player OMIT has a strategy to ensure that $\bigcap_{n < \omega}\Sigma_i$ is omissible in a model of $T$.
\end{thm}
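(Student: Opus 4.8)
The plan is to recognise the game in the statement as the omitting types game for the type-space functor $S = S(T)$ of Section~\ref{sec:TypeSpace:FirstOrder}, played on $C = S_\omega$, and then to invoke Theorem~\ref{thm:GameOTT}. Under the identification of a partial $\omega$-type $\Sigma$ (a set of formulas) with the closed set $[\Sigma] = \{p \in S_\omega : \Sigma \subseteq p\}$, a decreasing chain $\Sigma_0 \supseteq \Sigma_1 \supseteq \cdots$ of sets of formulas corresponds to an increasing chain $[\Sigma_0] \subseteq [\Sigma_1] \subseteq \cdots$ of closed subsets of $S_\omega$, and the legal-move condition ``$\Sigma_i$ is omissible in a model of $T$'' is exactly ``$[\Sigma_i]$ is omissible in a model realizing $S_\omega$''. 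So it suffices to show that $S(T)$ has the game omitting types property and then to read off the $C = S_\omega$ instance.

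By Lemma~\ref{lem:EnoughCountableModels}, $S(T)$ has enough countable models, so the hypotheses of Theorem~\ref{thm:GameOTT} are met once I check that $S_{\mathcal{W}}$ is completely weakly $\alpha$-favourable. Here I would recall the topological facts used in the first-order and $L_{\omega_1,\omega}$ examples above: for a countable fragment each $S_n$ is a Polish space, so $S_\omega$ is Polish and hence \v{C}ech-complete; by Lemmas~\ref{lem:SizeOfW} and~\ref{lem:EnoughCountableModels} the subspace $S_{\mathcal{W}}$ is a dense $G_\delta$ in $S_\omega$; and \v{C}ech-completeness is inherited by dense $G_\delta$ subspaces and by closed subspaces. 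Hence every closed subspace of $S_{\mathcal{W}}$ is \v{C}ech-complete.

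The one ingredient beyond those examples is the fact that every \v{C}ech-complete space is weakly $\alpha$-favourable; the earlier examples needed only the weaker conclusion that such spaces are Baire. I would supply NONEMPTY's winning strategy in the Banach--Mazur game directly: fix a compactification $K$ realizing the closed subspace as $\bigcap_n G_n$ for open $G_n \subseteq K$, and have NONEMPTY respond to each move so that the $K$-closures of the successively chosen open sets are nested and the $n$th one lies inside $G_n$ (possible by regularity of the compact space $K$). Compactness then forces $\bigcap_n \overline{O_n}^{K} \neq \emptyset$, and this intersection lands in $\bigcap_n G_n$, producing a point in the play's intersection. With this, every closed subspace of $S_{\mathcal{W}}$ is weakly $\alpha$-favourable, so $S_{\mathcal{W}}$ is completely weakly $\alpha$-favourable and Theorem~\ref{thm:GameOTT} yields the game omitting types property.

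The step I expect to require the most care is the translation of the abstract winning condition back into the statement's language, and this is where I would be most cautious. The abstract game declares OMIT the winner when $\bigcup_n [\Sigma_n]$ is omissible, that is, when some model of $T$ \emph{simultaneously} omits every $\Sigma_n$, whereas the statement asks that the single partial type $\bigcap_n \Sigma_n$ be omissible; these conditions are a priori different, since a tuple can realise $\bigcap_n \Sigma_n$ while failing to realise any individual $\Sigma_n$. I would verify that, exploiting that the $\Sigma_n$ decrease and using the reordering-closure device from the proof of Proposition~\ref{prop:StrongImpliesClassical}, a model produced by OMIT's strategy in fact omits the limit type $\bigcap_n \Sigma_n$: a point $\sigma \in S_{\mathcal{W}}$ surviving the Banach--Mazur play avoids each $[\Sigma_n]$ after closing under all finite reorderings of its enumeration, and I would check that in the logic topology this is precisely the condition that the associated model omits $\bigcap_n \Sigma_n$. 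Once this identification is pinned down, OMIT's winning strategy in the omitting types game transfers verbatim to the game in the statement.
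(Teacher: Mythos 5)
Your main reduction is the paper's own: show that $S_{\mc{W}}$ is completely weakly $\alpha$-favourable and invoke Theorem \ref{thm:GameOTT}. The paper obtains the topological input by citing Morley for the fact that $S_{\mc{W}}$ is Polish; your alternative derivation (each $S_n$ Polish, so $S_\omega$ is Polish, hence \v{C}ech-complete; $S_{\mc{W}}$ is a dense $G_\delta$ by Lemmas \ref{lem:SizeOfW} and \ref{lem:EnoughCountableModels}; \v{C}ech-completeness passes to dense $G_\delta$ subspaces and closed subspaces; \v{C}ech-complete spaces are weakly $\alpha$-favourable via the compactification strategy you sketch) is correct and essentially equivalent, since a $G_\delta$ subspace of a Polish space is Polish. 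Up to that point your proposal and the paper agree.

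The genuine gap is in your last paragraph. You promise to verify that a point $\sigma \in S_{\mc{W}}$ which, together with the reorderings of its enumeration, avoids each $[\Sigma_n]$ gives a model omitting $\bigcap_n \Sigma_n$, and you assert this is ``precisely'' the right condition in the logic topology. That verification cannot succeed: avoiding every $[\Sigma_n]$ says the model omits each $\Sigma_n$, and omitting $\bigcap_n \Sigma_n$ is strictly stronger, because for deductively closed $\Sigma_n$ the set $[\bigcap_n \Sigma_n]$ is the \emph{closure} of $\bigcup_n [\Sigma_n]$ in $S_\omega$, which is in general strictly larger; a tuple whose type lies in $\overline{\bigcup_n [\Sigma_n]} \setminus \bigcup_n [\Sigma_n]$ realizes $\bigcap_n \Sigma_n$ while realizing no $\Sigma_n$. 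Concretely, take $T = \operatorname{Th}(\mathbb{N})$ (first-order logic being a countable fragment) and let $\Sigma_n$ be the set of formulas implied modulo $T$ by ``one of $x_0, \dots, x_n$ is nonstandard'', so that $\Sigma_n \supseteq \{(x_0 > j) \vee \dots \vee (x_n > j) : j < \omega\}$. The $\Sigma_n$ decrease, and each is omissible: by pigeonhole any realization of $\Sigma_n$ has a nonstandard entry, so the standard model omits every $\Sigma_n$ simultaneously, under every reindexing. Yet $\bigcap_n \Sigma_n$ is exactly the set of consequences of $T$ (a formula $\phi(x_0,\dots,x_m)$ lying in every $\Sigma_n$ is, for $n > m$, implied by ``$x_n$ is nonstandard'', and every complete type of $(x_0,\dots,x_m)$ extends to such a type in an elementary extension, so $\phi$ lies in every complete type), and this trivial type is realized by every tuple of every model. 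So ``omits each $\Sigma_n$'' does not yield ``omits $\bigcap_n \Sigma_n$''. What Theorem \ref{thm:GameOTT} and the Banach--Mazur strategy actually deliver is a point of $S_{\mc{W}}$ outside $\bigcup_n [\Sigma_n]$, i.e.\ the simultaneous-omission conclusion, and no Banach--Mazur strategy can do better, since the opposing player can legally drive $\bigcup_n [\Sigma_n]$ to be dense (its complement's interior empty). To be fair, the paper's own proof compresses this translation into one sentence and does not confront the discrepancy either; but the defensible reading of the theorem is that the winning condition means ``some model of $T$ omits every $\Sigma_n$'', which is what your argument (and the paper's) proves, whereas the equivalence you explicitly undertake to check is false.
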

\begin{proof}
In the type-space functor $S$ of $T$ the space $S_{\mc{W}}$ is Polish (see \cite{Morley1974}), and therefore completely weakly $\alpha$-favourable.  It follows that $S$ has the game omitting types property.   The statement of the game omitting types property, together with the definition of the logic topology, give the desired conclusion.
\end{proof}

In many cases of interest it is possible to deduce the game omitting types property from the topology of $S_\omega$, rather than $S_{\mc{W}}$.
\begin{thm}
Let $S$ be a type-space functor with enough countable models, such that each $S_n$ is separable and metrizable.  If $S_\omega$ is completely weakly $\alpha$-favourable then $S$ satisfies the game omitting types property.
\end{thm}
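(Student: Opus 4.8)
The plan is to reduce the statement to Theorem \ref{thm:GameOTT} and then transfer weak $\alpha$-favourability from $S_\omega$ down to the subspace $S_{\mc{W}}$. By Theorem \ref{thm:GameOTT} it suffices to show that $S_{\mc{W}}$ is completely weakly $\alpha$-favourable, so the first task is to locate $S_{\mc{W}}$ inside $S_\omega$. Since each $S_n$ is separable and metrizable it is second countable, so $w(S_n) = \aleph_0$ and hence $\sum_n w(S_n) = \aleph_0$. By Lemma \ref{lem:SizeOfW} this makes $S_{\mc{W}}$ an intersection of countably many open subsets of $S_\omega$, i.e.\ a $G_\delta$; and it is dense because $S$ has enough countable models. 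Thus $S_{\mc{W}}$ is a dense $G_\delta$ subset of $S_\omega$.

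Next I would isolate the topological fact doing the work: a dense $G_\delta$ subspace $G$ of a weakly $\alpha$-favourable space $Y$ is itself weakly $\alpha$-favourable. Write $G = \bigcap_m G_m$ with each $G_m$ dense open in $Y$ (density of each $G_m$ follows from density of $G$), and fix a winning strategy $\tau$ for NONEMPTY on $Y$. I build a winning strategy for NONEMPTY on $G$ by running a shadow play on $Y$. At EMPTY's move $O_{2n} = U_{2n}\cap G$ in the $G$-game (with $U_{2n}$ open in $Y$, shrunk so that $U_{2n}$ lies inside the previous shadow move), feed $\tau$ the set $W_{2n} = U_{2n}\cap G_n$; because $G\subseteq G_n$ this satisfies $W_{2n}\cap G = O_{2n}\neq\emptyset$, so it is a legal nonempty $Y$-move, and crucially $W_{2n}\subseteq G_n$. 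Let NONEMPTY reply in the $G$-game with $O_{2n+1} = W_{2n+1}\cap G$, where $W_{2n+1}=\tau(W_0,\ldots,W_{2n})$; this is nonempty since $G$ is dense in $Y$. Then $(W_k)_k$ is a $\tau$-play, so $\bigcap_k W_k\neq\emptyset$, while $\bigcap_k W_k \subseteq \bigcap_n W_{2n}\subseteq \bigcap_n G_n = G$ by construction; hence $\bigcap_k O_k = \bigcap_k W_k\neq\emptyset$ and NONEMPTY wins the $G$-game.

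Finally I would apply this lemma one closed subspace at a time. Given a closed $F\subseteq S_{\mc{W}}$, let $C$ be its closure in $S_\omega$; then $C$ is a closed subspace of $S_\omega$, hence weakly $\alpha$-favourable by hypothesis, and one checks $F = C\cap S_{\mc{W}}$, so $F$ is dense in $C$ and is a $G_\delta$ in $C$ (being the trace on $C$ of the $G_\delta$ set $S_{\mc{W}}$). The lemma yields that $F$ is weakly $\alpha$-favourable, so $S_{\mc{W}}$ is completely weakly $\alpha$-favourable and Theorem \ref{thm:GameOTT} finishes the proof.

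The main obstacle I anticipate is the strategy-transfer lemma, specifically the bookkeeping that simultaneously (i) keeps the shadow sets $W_k$ a legal decreasing play to which $\tau$ applies, (ii) arranges $W_{2n}\subseteq G_n$ so the eventual intersection is trapped inside $G=\bigcap_n G_n$, and (iii) guarantees each reply $O_{2n+1}$ is a nonempty open subset of $G$ — which is exactly where density of $G$ in $Y$ is used. Everything else, namely the weight computation and the identification $F = C\cap S_{\mc{W}}$, is routine.
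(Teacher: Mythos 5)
Your proof is correct, and it shares the paper's skeleton --- reduce to Theorem \ref{thm:GameOTT}, show $S_{\mc{W}}$ is a dense $G_\delta$ in $S_\omega$ via Lemma \ref{lem:SizeOfW} and enough countable models, then handle a closed $F \subseteq S_{\mc{W}}$ by passing to its closure $C$ in $S_\omega$ --- but it genuinely differs at the key lemma. The paper does not prove the transfer statement: it cites White's theorem that a dense $G_\delta$ subspace of a weakly $\alpha$-favourable \emph{regular} space is weakly $\alpha$-favourable, and it invokes metrizability twice to set up that citation (regularity of the closure, and the fact that a closed subset of the metrizable $S_{\mc{W}}$ is $G_\delta$ in $S_{\mc{W}}$, hence in $S_\omega$, hence in its closure). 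You instead prove the transfer lemma from scratch by a shadow-play argument, and your argument is sound: shrinking $U_{2n}$ into the previous shadow move keeps the play legal, $W_{2n} = U_{2n}\cap G_n \supseteq O_{2n} \neq \emptyset$ and density of $G$ keep every move nonempty, and the trap $\bigcap_k W_k \subseteq \bigcap_n G_n = G$ combined with $W_k \cap G = O_k$ yields $\bigcap_k O_k = \bigcap_k W_k \neq \emptyset$. Your route buys two things: the transfer lemma holds with no regularity hypothesis at all, so separability and metrizability of the $S_n$ enter only through the weight computation that makes $S_{\mc{W}}$ a $G_\delta$; and your identification $F = C \cap S_{\mc{W}}$ (correct, since $F = D \cap S_{\mc{W}}$ for some closed $D \supseteq C$, giving $F \subseteq C\cap S_{\mc{W}} \subseteq D \cap S_{\mc{W}} = F$) makes $F$ a $G_\delta$ in $C$ by taking traces, bypassing the paper's metrizability step. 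The paper's route buys brevity by outsourcing the real work to White. One small point of hygiene: as stated, your strategy for NONEMPTY depends on auxiliary choices (the representatives $U_{2n}$ and the shadow history), not merely on the visible history of the game on $G$; this is standard and harmless, but to be fully rigorous you should fix choice functions once and for all so that the shadow play is reconstructed recursively from the history, making the strategy a genuine function of the position.
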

\begin{proof}
In this context $S_\omega$ is, by definition, a subspace of a product of separable metrizable spaces, and hence is itself separable and metrizable.  Moreover, $S_{\mc{W}}$ is a dense $G_\delta$ in $S_\omega$ by Lemma \ref{lem:SizeOfW} and the definition of ``enough countable models".  By Theorem \ref{thm:GameOTT} it suffices to prove the purely topological claim that if $X$ is a separable metrizable completely weakly $\alpha$-favourable space and $Y$ is a dense $G_\delta$ in $X$, then $Y$ is completely weakly $\alpha$-favourable.

Let $Z$ be a closed subspace of $Y$, and let $\overline{Z}$ be the closure of $Z$ in $X$.  Since $Y$ is metrizable and $Z$ is closed in $Y$, $Z$ is a $G_\delta$ in $Y$.  $Y$ itself is a $G_\delta$ in $X$, so $Z$ is a $G_\delta$ in $X$, and hence $Z$ is a $G_\delta$ in $\overline{Z}$.  On the other hand, $\overline{Z}$ is weakly $\alpha$-favourable by hypothesis, and of course $Z$ is dense in $\overline{Z}$.   White \cite{White1975} proved that dense $G_\delta$ subspaces of weakly $\alpha$-favourable regular spaces are weakly $\alpha$-favourable, so $Z$ is weakly $\alpha$-favourable as required.
\end{proof}
\section{Distinguishing the omitting types properties}\label{sec:Distinguishing}
In the previous section we defined three omitting types properties, so it is natural to ask if they are genuinely different.  We will focus on the question of distinguishing the game omitting types property from the strong omitting types property.  To do so we will use type-space functors of the form $S_X$, where $X$ is a  topological space (as defined in Section \ref{sec:Examples}).

\begin{lem}\label{lem:11}
Suppose that $X$ is a separable metrizable space $X$ such that $X^{\omega}$ is completely Baire, but $X^\omega$ does not include a dense completely metrizable subspace.  Then the type-space functor $S(X)$ has the strong omitting types property but does not have the game omitting types property.
\end{lem}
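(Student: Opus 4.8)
The claim has two halves, and the strategy is to play each of the two hypotheses on $X^\omega$ against one of the two omitting types properties, using the translation machinery from Section \ref{sec:OTT} together with the identification $S_\omega \cong X^\omega$ from Proposition \ref{prop:Xomega}. Since $X$ is separable, $S_X$ has enough countable models by Lemma \ref{lem:EnoughCountableModels}, so $S_{\mc{W}}$ is dense in $S_\omega$; and since $X$ is metrizable, $S_\omega \cong X^\omega$ is metrizable, hence quasi-regular and in fact completely quasi-regular. This puts us squarely in the setting where Theorem \ref{thm:StrongOTT} and Theorem \ref{thm:GameOTT} apply.

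\emph{The strong omitting types property.} First I would show $S_{\mc{W}}$ is completely Baire and invoke Theorem \ref{thm:StrongOTT}. By Lemma \ref{lem:SizeOfW}, since each $S_n = X^n$ is separable metrizable and hence second countable, $S_{\mc{W}}$ is a $G_\delta$ in $S_\omega \cong X^\omega$; combined with density this makes $S_{\mc{W}}$ a dense $G_\delta$. The hypothesis is that $X^\omega$ is completely Baire. The key topological step is that ``completely Baire'' passes to dense $G_\delta$ subspaces in the metrizable setting: if $Z \subseteq S_{\mc{W}}$ is closed, then taking its closure $\overline{Z}$ in $X^\omega$, one checks that $Z$ is a dense $G_\delta$ in $\overline{Z}$ (as in the proof of the final theorem of Section \ref{sec:Games}), and $\overline{Z}$ is Baire because $X^\omega$ is completely Baire; a dense $G_\delta$ subspace of a Baire space is Baire, so $Z$ is Baire. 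Hence $S_{\mc{W}}$ is completely Baire, and Theorem \ref{thm:StrongOTT} gives the strong omitting types property.

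\emph{Failure of the game omitting types property.} By Theorem \ref{thm:GameOTT}, it suffices to show $S_{\mc{W}}$ is \emph{not} completely weakly $\alpha$-favourable, and the natural candidate witness is $S_{\mc{W}}$ itself (or $X^\omega$). Here I would argue contrapositively against the second hypothesis: a weakly $\alpha$-favourable separable metrizable space contains a dense completely metrizable subspace. Indeed, a winning strategy for NONEMPTY in the Banach--Mazur game on a metrizable space can be used, via the standard Choquet/White analysis, to exhibit a dense completely metrizable (equivalently, dense Polish, equivalently dense $G_\delta$ of a completion) subspace. Since $S_{\mc{W}}$ is a dense $G_\delta$ in $X^\omega$, a dense completely metrizable subspace of $S_{\mc{W}}$ would be a dense completely metrizable subspace of $X^\omega$, contradicting the hypothesis. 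Therefore $S_{\mc{W}}$ is not weakly $\alpha$-favourable, so it is certainly not completely weakly $\alpha$-favourable, and the game property fails.

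\emph{Main obstacle.} The routine bookkeeping (enough countable models, the $G_\delta$ computation, metrizability) is immediate from the cited lemmas. The genuine content is the link between weak $\alpha$-favourability and the existence of dense completely metrizable subspaces in the separable metrizable setting; this is where I expect to lean on the characterization results of Choquet and White \cite{Choquet1969, White1975}, and care is needed to state exactly which direction of that equivalence is being used (a winning NONEMPTY strategy produces a dense Polish subspace). Getting this implication stated cleanly, and confirming it applies to $S_{\mc{W}}$ rather than only to $X^\omega$, is the crux of the argument.
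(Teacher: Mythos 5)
Your proof is correct and follows the same overall route as the paper's: identify $S_\omega$ with $X^\omega$ via Proposition \ref{prop:Xomega}, observe that $S_{\mc{W}}$ is a dense $G_\delta$ in $X^\omega$ by Lemmas \ref{lem:EnoughCountableModels} and \ref{lem:SizeOfW}, and then run the two halves through Theorems \ref{thm:StrongOTT} and \ref{thm:GameOTT}. The one genuine difference is in the completely Baire step: the paper outsources it, citing the theorem of Medini and Zdomskyy \cite{Medini2015} that a dense $G_\delta$ subspace of a completely Baire space is completely Baire, whereas you prove the needed instance inline --- for closed $Z \subseteq S_{\mc{W}}$, you note $Z = \overline{Z} \cap S_{\mc{W}}$ is a dense $G_\delta$ in its closure $\overline{Z}$ taken in $X^\omega$, that $\overline{Z}$ is Baire because $X^\omega$ is completely Baire, and that a dense $G_\delta$ subspace of a Baire space is Baire. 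This closure argument is valid, and it buys a self-contained proof at the cost of invoking the (standard, but worth stating explicitly) fact about dense $G_\delta$ subspaces of Baire spaces. For the game half, the implication you lean on --- a weakly $\alpha$-favourable separable metrizable space has a dense completely metrizable subspace --- is exactly the direction of Telg\'arsky's characterization that the paper cites \cite{Telgarsky1987}; attributing it to Choquet and White is a citation slip rather than a mathematical one, and your application of it to $S_{\mc{W}}$ (whose dense completely metrizable subspace would then be dense in $X^\omega$, contradicting the hypothesis) is precisely the paper's argument.
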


\begin{proof}
By Proposition \ref{prop:Xomega} $S(X)_\omega = X^\omega$, and by Lemmas \ref{lem:EnoughCountableModels} and \ref{lem:SizeOfW} $S(X)_{\mc{W}}$ is a dense $G_\delta$ in $X^\omega$.  Medini and Zdomskyy \cite{Medini2015} proved that every dense $G_\delta$ subspace of completely Baire space is completely Baire, so our assumption that $X^\omega$ is completely Baire implies that $S(X)_{\mc{W}}$ is completely Baire, and hence by Theorem \ref{thm:StrongOTT} $S(X)$ has the strong omitting types property.

Since $X$ is a separable metrizable space so is $X^\omega$, and hence also $S(X)_{\mc{W}}$.  Telg\'arsky \cite{Telgarsky1987} proved that a separable metrizable space is weakly $\alpha$-favourable if and only if it has a dense completely metrizable subspace.  Therefore if $S(X)$ had the game omitting types property, then $S(X)_{\mc{W}}$ would have a completely metrizable dense subspace, and hence $X^\omega$ would also have such a subspace, contrary to our hypothesis.  So $S(X)$ does not have the game omitting types property.
\end{proof}

We are therefore interested in the following, purely topological, problem:

\begin{prob}\label{prob1}
	Is there a separable metrizable space $X$ such that $X^\omega$ is completely Baire, but $X^\omega$ does not include a dense completely metrizable subspace?
\end{prob}

Surprisingly, the answer is positive, unless there exist large cardinals in an inner model of the set-theoretic universe. We are indebted to Lyubomyr Zdomskyy for pointing out that the problem is solved if there is a \emph{nonmeagre $P$-filter}.

\begin{defn}
	Following \cite{Bartoszynski1995} and \cite{Marciszewski1998}, given a non-principal filter $\mathcal{F}$ on an infinite countable set $T$, we regard $\mathcal{F}$ as a subspace of the copy $2^T$ of the Cantor set. $\mathcal{F}$ is a \textbf{$P$-filter} if whenever $\{U_n\}_{n<\omega}$ are members of $\mathcal{F}$, there is an $A\in\mathcal{F}$ almost included in each $U_n$. $\mathcal{F}$ is \textbf{nonmeagre} if it is a nonmeagre subspace of $2^T$.
\end{defn}

Marciszewski \cite{Marciszewski1998} proves:

\begin{lem}
	$\mathcal{F}$ is a nonmeagre $P$-filter if and only if $\mathcal{F}$ is a completely Baire space.
\end{lem}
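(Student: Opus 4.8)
The plan is to route everything through the Hurewicz dichotomy quoted above: since $\mathcal{F}$ is a subspace of the zero-dimensional separable metrizable space $2^T$, it is completely Baire if and only if it contains no closed copy of $\mathbb{Q}$. I take $T = \omega$ and use that a non-principal filter contains all cofinite sets, so $\mathcal{F}$ is closed under finite modifications. Two standard facts will do the work. The first is a homogeneity zero-one law: the countable group $G$ of finite modifications $x \mapsto x \mathbin{\triangle} s$ (for $s \in [\omega]^{<\omega}$) acts on $2^\omega$ by homeomorphisms preserving $\mathcal{F}$ with dense orbits, and $\mathcal{F}$ itself is dense in $2^\omega$ (the cofinite sets $\omega \setminus s$ are already dense and lie in $\mathcal{F}$). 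Consequently $\mathcal{F}$ is a Baire space in itself if and only if it is nonmeagre in $2^\omega$: if $\mathcal{F}$ is meagre in $2^\omega$ then, being dense, it is meagre in itself; and if some nonempty relatively open $V = \mathcal{F} \cap [t]$ were meagre in $\mathcal{F}$, then its countably many $G$-translates would cover $\mathcal{F}$ and display it as meagre in itself. The second fact is the Talagrand--Jalali-Naini characterization (see \cite{Bartoszynski1995}): $\mathcal{F}$ is meagre exactly when there is a partition of $\omega$ into consecutive finite intervals $\{I_n\}_{n < \omega}$ such that every $A \in \mathcal{F}$ meets all but finitely many of the $I_n$.

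I would prove the easier implication ``completely Baire $\Rightarrow$ nonmeagre $P$-filter'' first. Nonmeagreness is immediate, since a completely Baire space is in particular Baire, which by the zero-one law forces $\mathcal{F}$ nonmeagre in $2^\omega$. For the $P$-property I argue by contraposition: assuming $\mathcal{F}$ is not a $P$-filter, I fix a decreasing sequence $U_0 \supseteq U_1 \supseteq \cdots$ of members of $\mathcal{F}$ with no pseudo-intersection in $\mathcal{F}$ (no $A \in \mathcal{F}$ with $A \subseteq^* U_n$ for all $n$) and use it to code a countable, crowded subset $Q \subseteq \mathcal{F}$ whose only new limit points are sets almost below every $U_n$, hence outside $\mathcal{F}$. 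Then $Q$ is closed in $\mathcal{F}$, and by Sierpi\'nski's theorem $Q \cong \mathbb{Q}$, so by Hurewicz $\mathcal{F}$ is not completely Baire. (That nonmeagreness is genuinely needed alongside the $P$-property is seen from the cofinite filter itself, which is a meagre $P$-filter homeomorphic to $\mathbb{Q}$.)

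For the converse, ``nonmeagre $P$-filter $\Rightarrow$ completely Baire'', I again invoke Hurewicz and suppose toward a contradiction that there is a closed $Q \subseteq \mathcal{F}$ with $Q \cong \mathbb{Q}$. Being countable and crowded, $Q$ carries a Cantor scheme $\{O_t : t \in 2^{<\omega}\}$ of nonempty relatively clopen pieces, each of the form $O_t = Q \cap [u_t]$ for a finite partial function $u_t$, with the property that the branch limits lie in the closure of $Q$ but, for a suitable choice of branch, outside $Q$, hence (as $Q$ is closed in $\mathcal{F}$) outside $\mathcal{F}$. The plan is to run a fusion down a single well-chosen branch: the clopen constraints $[u_t]$ fix longer and longer initial segments of a limit $x$, while the sets $A_k \in O_{t_k} \subseteq \mathcal{F}$ form a mod-finite decreasing sequence to which the $P$-property associates a pseudo-intersection $A \in \mathcal{F}$; nonmeagreness, via the interval-partition characterization, is then used to steer the branch so that this $A$ is itself the branch limit $x$. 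This produces $x \in \mathcal{F} \cap (\overline{Q} \setminus Q)$, contradicting that $Q$ is closed in $\mathcal{F}$.

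The main obstacle is exactly this last fusion: one must diagonalize the tree of finite clopen constraints coming from $Q$ against the filter so that the $P$-property and nonmeagreness jointly force a limit point of $Q$ back into $\mathcal{F}$. Keeping the constructed set simultaneously inside $\mathcal{F}$, almost below the chosen tower, and equal to the prescribed branch limit is the delicate bookkeeping, and the tree-coding of the easier direction should serve as a template for organizing it.
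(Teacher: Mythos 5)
The paper itself gives no proof of this lemma --- it simply quotes Marciszewski \cite{Marciszewski1998} --- so your attempt has to be judged as a proof of Marciszewski's theorem, and in that role it is incomplete where all the content lies. The overall route (the Hurewicz dichotomy for metrizable spaces plus filter combinatorics) is indeed the right one. Your direction ``completely Baire $\Rightarrow$ nonmeagre $P$-filter'' is essentially sound, with two repairable caveats. First, ``non-principal'' does \emph{not} imply ``contains all cofinite sets'': the filter of cofinite sets containing $0$ is non-principal but omits $\omega\setminus\{0\}$; what you need is the standing convention of \cite{Bartoszynski1995} and \cite{Marciszewski1998} that filters extend the Fr\'echet filter, which is exactly what makes $\mathcal{F}$ dense in $2^{T}$ and invariant under finite modifications. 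Second, the closed copy of $\mathbb{Q}$ obtained from a tower $(U_n)$ with no pseudo-intersection in $\mathcal{F}$ must be built with care: the naive choice $Q=\{U_n\cup s: s \text{ finite}\}$ is \emph{not} closed in $\mathcal{F}$, since limits such as $U_1\cup t$ with $t$ an infinite, co-infinite subset of $U_0\setminus U_1$ lie in $\mathcal{F}\setminus Q$. A correct choice is a tree whose root is $U_0$, where a node $x\supseteq U_n$ at level $n$ has children $U_{n+1}\cup\left(x\cap[0,m)\right)$ for $m\to\infty$; then every new limit point is a pseudo-intersection of the tower, hence outside $\mathcal{F}$.

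The genuine gap is the converse, ``nonmeagre $P$-filter $\Rightarrow$ completely Baire,'' where you outline a fusion and then concede it is ``the main obstacle'': that obstacle \emph{is} the theorem, and two things go wrong in your outline. First, your target --- steering the branch so that the pseudo-intersection $A$ is itself the branch limit $x$ --- is unachievable in general: $A$ need not lie in $\overline{Q}$ at all (every element of $Q$ may contain a fixed point outside $A$, and then so does every point of $\overline{Q}$). What you actually need is that $x$ contains some member of $\mathcal{F}$ modulo a finite set, so that $x\in\mathcal{F}$ by upward closure. Second, even this weaker fusion cannot be run on the $P$-property alone, which is all your sketch concretely invokes: at a typical step you must find a \emph{new} point of $Q$ agreeing with the current point $B$ on $[0,m)$ and still containing $A\cap[m,\infty)$, and such a point may fail to exist. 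The Fr\'echet filter --- a meagre $P$-filter which is its own closed copy of $\mathbb{Q}$ --- realizes exactly this failure: there one may take $A=\omega$, and the unique set agreeing with $B$ below $m$ and containing $[m,\infty)$ is $\left(B\cap[0,m)\right)\cup[m,\infty)$, so the branch can never split and the limit stays in $Q$. Ruling this out is precisely where nonmeagreness must be interleaved with the $P$-property (via the Jalali-Naini--Talagrand interval-partition characterization, applied to partitions generated \emph{during} the fusion, not fixed in advance); this interleaving is the heart of Marciszewski's argument, and nothing in your proposal supplies it. As written, the hard direction is a plan, not a proof.
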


He also proves the following ``standard fact'', which he attributes to \cite{Bartoszynski1995}:

\begin{lem}
	Let $\{\mathcal{F}_n\}_{n<\omega}$ be a sequence of $P$-filters on $\omega$. Then $\prod\limits_{n<\omega}\mathcal{F}_n$ is a $P$-filter on $\omega\times\omega$.
\end{lem}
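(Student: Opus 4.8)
The plan is to work with the usual product filter: for $A \subseteq \omega \times \omega$ write $A_n = \{\, m \in \omega : (n,m) \in A \,\}$ for its $n$-th vertical section, and declare $A \in \prod_{n<\omega} \mathcal{F}_n$ exactly when $A_n \in \mathcal{F}_n$ for every $n$. This is the filter whose underlying set, as a subspace of $2^{\omega\times\omega}$, is homeomorphic to the topological product $\prod_n \mathcal{F}_n$; that it is a free filter is routine, using that each $\mathcal{F}_n$ is free. So I must verify only the $P$-filter property: given members $U^0, U^1, \ldots$ of $\prod_n \mathcal{F}_n$, I need a single $A \in \prod_n \mathcal{F}_n$ with $A \subseteq^* U^j$ for all $j$, where $A \subseteq^* U^j$ means $A \setminus U^j$ is finite as a subset of $\omega \times \omega$.

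First I would apply the hypothesis column by column. For each fixed $n$ the sections $(U^0)_n, (U^1)_n, \ldots$ all lie in the $P$-filter $\mathcal{F}_n$, so there is $W_n \in \mathcal{F}_n$ with $W_n \subseteq^* (U^j)_n$ for every $j$; equivalently, each error set $W_n \setminus (U^j)_n$ is finite. I would then diagonalize by setting $A_n = W_n \cap \bigcap_{i \le n} (U^i)_n$, which lies in $\mathcal{F}_n$ as a finite intersection of members of $\mathcal{F}_n$, and take $A = \bigcup_n \{n\} \times A_n$, so that $A \in \prod_n \mathcal{F}_n$ by construction.

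To finish I would check $A \subseteq^* U^j$ by splitting $A \setminus U^j$ into its columns. For $n \ge j$ we have $A_n \subseteq \bigcap_{i\le n}(U^i)_n \subseteq (U^j)_n$, so these columns contribute nothing to the difference; for the finitely many $n < j$ we have $A_n \subseteq W_n$, whence $A_n \setminus (U^j)_n \subseteq W_n \setminus (U^j)_n$ is finite. Thus $A \setminus U^j$ is a finite union of finite sets, hence finite, as required.

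The step I expect to be the real obstacle, and the reason this is not entirely trivial, is passing from column-wise almost inclusion to almost inclusion on all of $\omega \times \omega$. The naive move of applying the $P$-property in each column and then unioning the resulting sets fails: almost inclusion in each of infinitely many columns leaves a finite error per column, and infinitely many finite errors can sum to an infinite subset of $\omega \times \omega$. The diagonalization above is designed precisely to defeat this: intersecting with $\bigcap_{i \le n}(U^i)_n$ forces \emph{genuine} containment $A_n \subseteq (U^j)_n$ in all but the finitely many columns $n < j$, while the single set $W_n$ supplied by the $P$-property of $\mathcal{F}_n$ keeps the error in each of those remaining columns finite. Balancing these two effects, exact containment for large $n$ and controlled finite error for small $n$, is the heart of the argument.
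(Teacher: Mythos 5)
Your proof is correct. The paper itself gives no proof of this lemma---it quotes it as a ``standard fact'' from Bartoszy\'nski--Judah via Marciszewski---and your section-wise diagonalization (taking $A_n = W_n \cap \bigcap_{i\le n}(U^i)_n$ so that columns $n \ge j$ contribute nothing to $A \setminus U^j$ while the $P$-property witness $W_n$ keeps the finitely many columns $n < j$ under control) is precisely the standard argument those sources have in mind.
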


From this he derives:

\begin{lem}
	Let $\{F_n\}_{n<\omega}$ be a sequence of nonmeagre $P$-filters on $\omega$. Then $\prod\limits_{n<\omega}\mathcal{F}_n$ is completely Baire.
\end{lem}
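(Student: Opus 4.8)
The plan is to deduce the statement from the two lemmas just quoted together with Marciszewski's topological characterization of complete Baireness. Set $\mathcal{F} = \prod_{n<\omega}\mathcal{F}_n$, viewed as a filter on $\omega\times\omega$ under the identification of $2^{\omega\times\omega}$ with $\prod_{n<\omega}2^{\omega}$: a set $B \subseteq \omega\times\omega$ lies in $\mathcal{F}$ exactly when each column $B_n = \{m : (n,m)\in B\}$ lies in $\mathcal{F}_n$. The preceding lemma (products of $P$-filters are $P$-filters) tells us that $\mathcal{F}$ is a $P$-filter, and it plainly contains the Fr\'echet filter, since a cofinite subset of $\omega\times\omega$ has all columns cofinite. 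Hence, by Marciszewski's characterization, $\mathcal{F}$ is completely Baire as soon as it is shown to be \emph{nonmeagre}. The entire content therefore reduces to one combinatorial assertion: a full-section product of nonmeagre filters is nonmeagre.

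To attack nonmeagreness I would invoke the Jalali-Naini--Talagrand characterization: a filter containing the Fr\'echet filter is meagre precisely when some partition $\{J_k\}_{k<\omega}$ of its index set into finite pieces is met by every member in all but finitely many pieces. Assuming such a partition of $\omega\times\omega$ witnesses meagreness of $\mathcal{F}$, the aim is to construct a single $B\in\mathcal{F}$ disjoint from infinitely many $J_k$. For each column $n$, the nonempty sections $J_k^n = \{m : (n,m)\in J_k\}$ partition that column into finite sets, so nonmeagreness of $\mathcal{F}_n$ produces a member of $\mathcal{F}_n$ missing infinitely many of them. The difficulty is to make these column-by-column choices cohere into one element of $\mathcal{F}$.

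The safe starting point is the \emph{finite} case. If only finitely many columns $0, \dots, N$ are relevant, then $\prod_{n\le N}\mathcal{F}_n$ is the ordinary product $\mathcal{F}_0 \times \cdots \times \mathcal{F}_N$ of nonmeagre subsets of Cantor space, and it is nonmeagre by the Kuratowski--Ulam theorem: the implication ``a meagre set has almost every vertical section meagre'' requires no Baire-property hypothesis, so it applies to filters, and an obvious induction covers all finite $N$. Consequently any family of cells confined to a fixed finite block of columns can be defeated outright.

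The real obstacle is that in general the cells $J_k$ spread over unboundedly many columns (the quantity $\max\{n : J_k \cap (\{n\}\times\omega) \neq \emptyset\}$ need not be bounded, and indeed no infinite subfamily of the $J_k$ need avoid a prescribed column), so the problem cannot be localized to finitely many coordinates. This is exactly where the \emph{$P$-filter} hypothesis does essential work, rather than merely guaranteeing that $\mathcal{F}$ is again a $P$-filter. One must select an infinite set $K$ of cells so that, simultaneously for every column $n$, the points deleted from that column,
\[ \bigcup\{\, J_k^n : k\in K,\ J_k \cap (\{n\}\times\omega) \neq \emptyset \,\}, \]
remain small with respect to $\mathcal{F}_n$, i.e.\ their complement stays in $\mathcal{F}_n$; the $P$-property is what lets the countably many deletion commitments be amalgamated into a single legitimate section $B_n\in\mathcal{F}_n$ for each $n$ while still killing infinitely many cells. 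I expect this fusion to be the one genuinely delicate step; granting it, the resulting $B$ lies in $\mathcal{F}$ and is disjoint from infinitely many $J_k$, contradicting the choice of partition. This establishes nonmeagreness, and with the two quoted lemmas it yields that $\mathcal{F}$ is completely Baire.
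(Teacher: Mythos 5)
Your opening reduction is correct, and it is exactly the derivation the paper has in mind (the paper itself supplies no argument, deferring to Marciszewski): the product $\mathcal{F}=\prod_{n<\omega}\mathcal{F}_n$ is a $P$-filter on $\omega\times\omega$ containing the Fr\'echet filter, so by Marciszewski's characterization the lemma follows once $\mathcal{F}$ is shown to be nonmeagre. The problem is that you never show it. After invoking the Jalali-Naini--Talagrand characterization and correctly isolating the difficulty (cells meeting unboundedly many columns), you write that you ``expect this fusion to be the one genuinely delicate step; granting it\dots''. That step cannot be granted: by your own account it is the entire content of the lemma beyond the two quoted citations, so what you have is a plan whose sole nontrivial step is missing, not a proof.

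The plan also points at the wrong hypothesis for closing the gap. The fusion needs no $P$-property; nonmeagreness of the factors suffices. Concretely: given a partition $\{J_k\}_{k<\omega}$ of $\omega\times\omega$ into finite sets, recursively choose $m_0<m_1<\cdots$ and cells $J_{k_0},J_{k_1},\dots$ with $J_{k_i}\cap[0,m_i)^2=\emptyset$ and $J_{k_i}\subseteq[0,m_{i+1})^2$. Process the columns one at a time, maintaining infinite index sets $\omega\supseteq I_0\supseteq I_1\supseteq\cdots$: at stage $n$, enclose the pairwise disjoint finite traces $J^n_{k_i}$, $i\in I_{n-1}$, in the pieces of a partition of column $n$ into finite sets, and use nonmeagreness of $\mathcal{F}_n$ (the easy direction: a nonmeagre filter is not feeble) to obtain $A_n\in\mathcal{F}_n$ avoiding those traces for all $i$ in an infinite $I_n\subseteq I_{n-1}$. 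Take a diagonal infinite set $I^*$ with $I^*\setminus I_n$ finite for every $n$, and put $B_n=A_n\setminus\bigcup\{J^n_{k_i}:i\in I^*\setminus I_n\}$; only a finite set is removed, so $B_n\in\mathcal{F}_n$ because $\mathcal{F}_n$ contains the cofinite sets. Then $B=\bigcup_{n<\omega}\{n\}\times B_n$ lies in $\mathcal{F}$ and misses $J_{k_i}$ for every $i\in I^*$, so $\mathcal{F}$ is not feeble, hence nonmeagre by the nontrivial direction of the very theorem you quoted. Notice that the $P$-property appears nowhere in this; its ``essential work'' in the lemma is precisely the role you dismiss, namely guaranteeing (via the quoted product lemma) that $\mathcal{F}$ is again a $P$-filter, which is what licenses the appeal to Marciszewski's characterization --- indeed, for a nonmeagre filter that is not a $P$-filter the countable product is still nonmeagre by the above, yet it is not a $P$-filter and therefore not completely Baire. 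Your Kuratowski--Ulam treatment of finite products is correct but, as you yourself note, beside the point.
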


As Zdomskyy pointed out to us,

\begin{lem}
	If $\mathcal{F}$ is a nonmeagre $P$-filter then $\mathcal{F}^\omega$ does not include a dense completely metrizable subspace.
\end{lem}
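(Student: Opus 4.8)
The claim is that if $\mathcal{F}$ is a nonmeagre $P$-filter, then $\mathcal{F}^\omega$ contains no dense completely metrizable subspace. The plan is to argue by contradiction: suppose $G \subseteq \mathcal{F}^\omega$ is dense and completely metrizable. Since a separable metrizable completely metrizable subspace of a metrizable space is a $G_\delta$ in any ambient space, $G$ would be a dense $G_\delta$ in $\mathcal{F}^\omega$, and hence (being completely metrizable) in particular Baire with a nice structure. My strategy is to project this down to a single coordinate and derive a property of $\mathcal{F}$ that contradicts its being a nonmeagre $P$-filter. The key structural fact I would lean on is that nonmeagre filters, regarded as subspaces of $2^\omega$, are \emph{not} Polish — indeed a classical fact (essentially the $0$-$1$ law for filters, due to Talagrand and Jalali-Naini) says a filter with the Baire property is meagre, so a nonmeagre filter never has the Baire property and in particular is never completely metrizable. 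The point is to transfer this failure from $\mathcal{F}$ to $\mathcal{F}^\omega$.

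First I would observe that $\mathcal{F}$ itself, being nonmeagre, admits no dense completely metrizable subspace: any such subspace would be a dense $G_\delta$ (in $\mathcal{F}$, hence in the ambient $2^\omega$ after intersecting appropriately), and one checks that a dense completely metrizable subspace of a nonmeagre filter would force the filter to contain a dense $G_\delta$ of $2^\omega$ on which the filter structure is Baire, contradicting the Talagrand--Jalali-Naini dichotomy. Next, the reduction from $\mathcal{F}^\omega$ to $\mathcal{F}$: projection onto the first coordinate $\pi_0 : \mathcal{F}^\omega \to \mathcal{F}$ is continuous, open, and surjective, and $\mathcal{F}^\omega$ splits as $\mathcal{F} \times \mathcal{F}^\omega$. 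If $G \subseteq \mathcal{F}^\omega$ were dense and completely metrizable, I would want to produce from it a dense completely metrizable subspace of a single factor $\mathcal{F}$, which is impossible by the previous paragraph.

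The main obstacle is precisely this last reduction step: a dense completely metrizable subspace of a product need not project to a dense completely metrizable subspace of a factor, since the image of a $G_\delta$ under an open map is generally only analytic, not $G_\delta$, and complete metrizability is not preserved by open continuous surjections. The clean way around this is to use a \emph{Kuratowski--Ulam} style fibering argument: if $G$ is a dense $G_\delta$ in $\mathcal{F} \times \mathcal{F}^\omega$, then comeagrely many vertical fibres $\{x\} \times \mathcal{F}^\omega$ meet $G$ in a dense $G_\delta$ of the fibre, and dually comeagrely many horizontal slices $G_y = \{x : (x,y) \in G\}$ are dense $G_\delta$ in $\mathcal{F}$. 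Fixing one such $y$ for which $G_y$ is a dense $G_\delta$ in $\mathcal{F}$ gives a dense $G_\delta$ subspace of $\mathcal{F}$ that is itself completely metrizable (being $G_\delta$ in the completely metrizable $G$ via the embedding $x \mapsto (x,y)$), contradicting the fact that the nonmeagre filter $\mathcal{F}$ has no such subspace. The delicate point to verify carefully is that the horizontal slice genuinely inherits complete metrizability — this follows because $G_y$ is homeomorphic to $G \cap (\mathcal{F} \times \{y\})$, which is closed in $G$ and hence completely metrizable.
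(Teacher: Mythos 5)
Your proposal is correct in substance, but it takes a genuinely different route from the paper's. The paper's proof rests on Marciszewski's observation that $\mathcal{F}^\omega$ can itself be regarded as a ($P$-)filter on the countable set $\omega\times\omega$; the lemma then reduces to the fact that no filter on a countable set includes a dense completely metrizable subspace $M$, since such an $M$ would be a dense $G_\delta$ in the Cantor set $2^{\omega\times\omega}$, hence comeager, and a comeager subset of a filter is absurd (the paper extracts a member of the filter omitting every point $\langle m,n\rangle$, i.e.\ the empty set). You never use the fact that a product of filters is a filter; instead you dispose of the single factor $\mathcal{F}$ via the Jalali-Naini--Talagrand dichotomy (a filter with the Baire property is meagre, so the nonmeagre $\mathcal{F}$ cannot include a comeager-in-$2^\omega$ set), and you transfer the hypothetical dense completely metrizable $G\subseteq\mathcal{F}^\omega\cong\mathcal{F}\times\mathcal{F}^\omega$ down to $\mathcal{F}$ by Kuratowski--Ulam; your key verification, that a slice $G_y$ is closed in $G$ and hence completely metrizable, is exactly right, and Kuratowski--Ulam applies because all spaces involved are second countable. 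Two small points should be made explicit to close your argument: first, ``comeagerly many $y$'' yields an actual witness only if $\mathcal{F}^\omega$ is nonmeagre in itself, which is free under your reductio hypothesis (a space with a dense Baire subspace is Baire), or follows from the paper's preceding lemma that $\mathcal{F}^\omega$ is completely Baire; second, to contradict your first step you need $G_y$ to be dense, not merely comeager, in $\mathcal{F}$, which uses that $\mathcal{F}$ is Baire (Marciszewski's characterization, quoted just before the lemma) --- alternatively, rephrase the first step so that a comeager completely metrizable subspace of $\mathcal{F}$ already gives $\mathcal{F}$ the Baire property while nonmeagre, contradicting the dichotomy. In exchange for the heavier tools (Kuratowski--Ulam, Talagrand), your argument would survive in settings where the product carries no natural filter structure, whereas the paper's argument is shorter and uses only elementary Baire category in $2^{\omega\times\omega}$; both proofs bottom out in the same fact, namely that a filter cannot contain a comeager piece of Cantor space.
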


\begin{proof}
	No filter on a countable set can include such a subspace, else it would be comeager, which is impossible. In more detail, Marciszewski points out that if $\mathcal{F}$ is a $P$-filter on $\omega$, $\mathcal{F}^\omega$ can be considered as a $P$-filter on $\omega\times\omega$. A completely metrizable subspace $M$ of $\mathcal{F}^\omega$ would be $G_\delta$, and hence if dense would be comeager. Without loss of generality, assume $M$ has no isolated points. Then $M\cap\{Z\subseteq\omega\times\omega:\langle m,n\rangle\notin Z\}$ is comeager. Then $\bigcap\{M\cap\{Z\subseteq\omega\times\omega:\langle m,n\rangle\notin Z\}:m,n\in\omega\}\neq 0$, which is absurd.
\end{proof}

By the result of Telg\'arsky \cite{Telgarsky1987} it follows that if $\mathcal{F}$ is a nonmeagre $P$-filter, then $\mathcal{F}^\omega$ is not weakly $\alpha$-favourable and neither is any dense subspace.  
We have answered Problem \ref{prob1}, assuming there is a nonmeagre $P$-filter on $\omega$.  Combining the above observations with Lemma \ref{lem:11}, we have proved:

\begin{prop}
If $\mathcal{F}$ is a nonmeagre $P$-filter on $\omega$, viewed as a topological space, then the type-space functor $S(\mc{F})$ has the strong omitting types property but does not have the game omitting types property.
\end{prop}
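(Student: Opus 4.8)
The plan is to apply Lemma~\ref{lem:11} to $X = \mathcal{F}$, which reduces the proposition to checking the three hypotheses of that lemma for $\mathcal{F}$: that $\mathcal{F}$ is separable and metrizable, that $\mathcal{F}^\omega$ is completely Baire, and that $\mathcal{F}^\omega$ contains no dense completely metrizable subspace.

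The first point is immediate, since by definition $\mathcal{F}$ is a subspace of the Cantor set $2^\omega$, and every subspace of $2^\omega$ is separable and metrizable. For the second, I would invoke the lemma above (attributed to Bartoszy\'nski and Marciszewski) stating that a countable product of nonmeagre $P$-filters is completely Baire, applied to the constant sequence $\mathcal{F}_n = \mathcal{F}$; since $\prod_{n<\omega}\mathcal{F}_n$ is then precisely the $\omega$-power $\mathcal{F}^\omega$, this gives that $\mathcal{F}^\omega$ is completely Baire. The third point is exactly the content of Zdomskyy's lemma above, which asserts that a nonmeagre $P$-filter $\mathcal{F}$ has no dense completely metrizable subspace in $\mathcal{F}^\omega$.

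With the three hypotheses verified, Lemma~\ref{lem:11} yields directly that $S(\mathcal{F})$ has the strong omitting types property but not the game omitting types property, completing the argument. The proof is thus essentially a bookkeeping assembly of the preceding lemmas, and I do not expect a genuine obstacle: all of the substantive topology has already been carried out. The only point requiring a moment's care is the identification of the $\omega$-power $\mathcal{F}^\omega$ with the product $\prod_{n<\omega}\mathcal{F}_n$ of the constant sequence, which is the step that transports the complete-Baireness conclusion to $\mathcal{F}^\omega$ and makes the hypothesis of Lemma~\ref{lem:11} literally applicable.
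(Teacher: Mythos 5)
Your proposal is correct and follows exactly the paper's own argument: the paper likewise obtains the proposition by ``combining the above observations with Lemma~\ref{lem:11}'', i.e., using Marciszewski's product lemma (applied to the constant sequence) for complete Baireness of $\mathcal{F}^\omega$ and Zdomskyy's lemma for the absence of a dense completely metrizable subspace. Nothing is missing; the identification of $\prod_{n<\omega}\mathcal{F}_n$ with $\mathcal{F}^\omega$ that you flag is indeed the only point of care, and it is handled the same way in the paper.
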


It remains to answer:

\begin{prob}[ \cite{Bartoszynski1995}]
	Is there a nonmeagre $P$-filter on $\omega$?
\end{prob}

This is a quite intriguing question. In \cite{Kunen2015}, seven equivalent properties of a filter on $\omega$ are given, one of which is ``nonmeagre $P$-''.

This question is discussed in \cite{Just1990}, \cite{Bartoszynski1995}, \cite{Kunen2015} and several other places. Here are some sample results taken from \cite{Bartoszynski1995}. They note that a filter $\mathcal{F}$ on $\omega$ is nonmeagre if and only if the set of functions enumerating elements of $\mathcal{F}$ is not bounded. It follows that:

\begin{prop}[ \textnormal{\cite[4.4.12]{Bartoszynski1995}}]
	If $\mathfrak{t}=\mathfrak{b}$, then there is a nonmeagre $P$-filter.
\end{prop}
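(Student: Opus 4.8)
The plan is to build, by transfinite recursion of length $\kappa = \mathfrak{t} = \mathfrak{b}$, a $\subseteq^*$-decreasing tower $\langle A_\alpha : \alpha < \kappa \rangle$ of infinite subsets of $\omega$, and to take $\mathcal{F}$ to be the filter it generates, namely $\mathcal{F} = \{X \subseteq \omega : A_\alpha \subseteq^* X \text{ for some } \alpha < \kappa\}$. Two observations drive the construction. First, the criterion quoted in the excerpt: writing $e_A$ for the increasing enumeration of an infinite $A \subseteq \omega$, the filter $\mathcal{F}$ is nonmeagre exactly when $\{e_A : A \in \mathcal{F}\}$ is $\leq^*$-unbounded in $\omega^\omega$. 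Second, a $\subseteq^*$-decreasing tower of uncountable cofinality automatically generates a $P$-filter: given countably many $U_n \in \mathcal{F}$, pick $\alpha_n$ with $A_{\alpha_n} \subseteq^* U_n$, and since $\kappa$ is regular and uncountable (both $\mathfrak{t}$ and $\mathfrak{b}$ are regular), $\alpha^* = \sup_n \alpha_n < \kappa$, whence $A_{\alpha^*} \in \mathcal{F}$ is a pseudo-intersection of all the $U_n$. Thus the only genuine work is to arrange nonmeagreness while keeping the tower going.

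First I fix a $\leq^*$-unbounded family $\langle f_\alpha : \alpha < \kappa \rangle$ in $\omega^\omega$, which exists precisely because $\mathfrak{b} = \kappa$; I may assume each $f_\alpha$ is strictly increasing. The recursion runs as follows. Suppose I have built the $\subseteq^*$-decreasing sequence $\langle A_\beta : \beta < \alpha \rangle$ with $\alpha < \kappa = \mathfrak{t}$. This is a tower of length strictly below $\mathfrak{t}$, so it has an infinite pseudo-intersection $B$ with $B \subseteq^* A_\beta$ for every $\beta < \alpha$. I then thin $B$ to an infinite $A_\alpha \subseteq B$ whose enumeration dominates $f_\alpha$: enumerating $B = \{b_0 < b_1 < \cdots\}$, I choose indices $n_0 < n_1 < \cdots$ with $b_{n_k} \geq f_\alpha(k)$ for all $k$ (possible since $b_j \to \infty$) and set $A_\alpha = \{b_{n_k} : k < \omega\}$, so that $e_{A_\alpha}(k) = b_{n_k} \geq f_\alpha(k)$. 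Because $A_\alpha \subseteq B \subseteq^* A_\beta$ for all $\beta < \alpha$, the tower remains $\subseteq^*$-decreasing, and thinning an infinite set can only enlarge its enumeration, so there is no conflict between the domination demand and the tower structure.

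Finally I verify the two properties of $\mathcal{F}$. Since every cofinite set contains $A_0$ modulo finite, $\mathcal{F}$ extends the Fréchet filter and is therefore a proper non-principal filter; by the first paragraph it is a $P$-filter. For nonmeagreness, each $A_\alpha$ lies in $\mathcal{F}$, so $\{e_{A_\alpha} : \alpha < \kappa\} \subseteq \{e_A : A \in \mathcal{F}\}$; as $e_{A_\alpha} \geq^* f_\alpha$ and $\langle f_\alpha \rangle$ is unbounded, the family $\{e_{A_\alpha}\}$ is itself unbounded (any $g$ bounding it would bound every $f_\alpha$), hence so is the larger family $\{e_A : A \in \mathcal{F}\}$, and the criterion gives that $\mathcal{F}$ is nonmeagre.

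The conceptual heart of the argument — and the only place the hypothesis $\mathfrak{t} = \mathfrak{b}$ is used essentially — is the coordination of two competing demands along a single recursion of length $\kappa$: preservation of pseudo-intersections forces each stage to stay below $\mathfrak{t}$, while exhausting a full unbounded family forces the length to be at least $\mathfrak{b}$, and the equality makes both simultaneously achievable. I expect the only point requiring real care is the claim that the thinning step never disturbs the tower, which, as noted, holds because passing to an infinite subset can only speed up an enumeration; everything else is bookkeeping against the definitions of $\mathfrak{t}$, $\mathfrak{b}$, and the nonmeagreness criterion.
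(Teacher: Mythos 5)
Your proof is correct, and it follows essentially the intended argument: the paper itself gives no proof (it cites \cite[4.4.12]{Bartoszynski1995} and merely notes beforehand the criterion that a filter is nonmeagre iff its enumeration functions are $\leq^*$-unbounded), and your tower-of-length-$\mathfrak{t}=\mathfrak{b}$ construction, with each $A_\alpha$ thinned so that $e_{A_\alpha}$ dominates the $\alpha$th member of an unbounded family, is exactly the standard argument behind that citation. The supporting details you supply --- regularity of $\mathfrak{b}$ giving the $P$-filter property, and stages below $\mathfrak{t}$ guaranteeing pseudo-intersections --- are all sound.
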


Also,

\begin{prop}[ \textnormal{\cite[4.4.13 (Just et al \cite{Just1990})]{Bartoszynski1995}}]
	Suppose cof$([\mathfrak{d}]^{<\aleph_1})=\mathfrak{d}$. Then there is a nonmeagre $P$-filter.
\end{prop}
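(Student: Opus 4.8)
The plan is to build the filter directly by a transfinite recursion of length $\mathfrak{d}$, using the characterization quoted above from \cite{Bartoszynski1995}: a filter $\mathcal{F}$ on $\omega$ is nonmeagre exactly when the family $\{e_A : A \in \mathcal{F}\}$ of enumerating functions of its members is $\le^*$-unbounded. The goal is therefore to produce a $P$-filter $\mathcal{F}$ whose enumerating functions are unbounded. First I would reduce unboundedness to only $\mathfrak{d}$ requirements: fix a dominating family $\{g_\xi : \xi < \mathfrak{d}\}$ and arrange that for each $\xi$ some $A \in \mathcal{F}$ has $e_A \not\le^* g_\xi$. This suffices, since an arbitrary $h$ is dominated by some $g_\xi$, and then the associated $A$ satisfies $e_A(n) > g_\xi(n) \ge h(n)$ for infinitely many $n$, so no single $h$ bounds all the enumerating functions.

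The $P$-filter requirement is the delicate part, because it a priori demands one pseudo-intersection for each countable subfamily of the generating set, and there may be $\mathfrak{d}^{\aleph_0} > \mathfrak{d}$ of these. This is precisely where the hypothesis $\op{cof}([\mathfrak{d}]^{<\aleph_1}) = \mathfrak{d}$ is used: I would fix a family $\{s_\xi : \xi < \mathfrak{d}\} \subseteq [\mathfrak{d}]^{<\aleph_1}$ that is cofinal under inclusion, so that every countable set of indices is contained in some $s_\xi$. Since $\op{cf}(\mathfrak{d}) \ge \mathfrak{b} \ge \aleph_1$ in ZFC, every $s_\xi$ is bounded below $\mathfrak{d}$, and so I can schedule the bookkeeping so that $s_\xi$ is processed only at a stage $\xi$ lying above $\sup s_\xi$; interleaving this with the dominating family, both lists of $\mathfrak{d}$ tasks are handled cofinally.

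At stage $\xi$, given infinite sets $A_\eta$ for $\eta < \xi$ already enjoying the strong finite intersection property, I would pass to a pseudo-intersection of the countable subfamily $\{A_\eta : \eta \in s_\xi\}$ (which exists by diagonalization) and then thin it to a sparse set $A_\xi \subseteq^* A_\eta$ for all $\eta \in s_\xi$ whose $n$-th element exceeds $g_\xi(n)$. This secures $e_{A_\xi} \not\le^* g_\xi$ and the refinement in a single step, and the key point is that thinning preserves $\subseteq^*$, so the escaping clause never conflicts with the refining clause. The resulting family $\{A_\xi : \xi < \mathfrak{d}\}$ is $\subseteq^*$-directed, because any finite, indeed countable, set of indices lies in some $s_\xi$ whose $A_\xi$ is almost contained in all of them; hence the generated filter $\mathcal{F}$ is proper. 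The same observation shows $\mathcal{F}$ is a $P$-filter, since any countable subfamily of $\mathcal{F}$ reduces to countably many generators, those are captured by some $s_\xi$, and $A_\xi \in \mathcal{F}$ is then a pseudo-intersection of the subfamily. Unboundedness, and hence nonmeagreness, follows from the escaping clauses.

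The step I expect to demand the most care is maintaining directedness, equivalently the strong finite intersection property, across the whole recursion: one must guarantee that the pseudo-intersection chosen at stage $\xi$ does not become almost disjoint from an earlier generator whose index lies outside $s_\xi$. The resolution is exactly the combinatorics above, namely that cofinality of $\{s_\xi\}$ forces any two generators to be jointly refined by some later $A_\xi$, so directedness comes for free rather than having to be engineered. Verifying that the scheduling can genuinely be arranged so that each $s_\xi$ is processed above its support, and that the cofinal family can be taken with $s_\xi \subseteq \xi$, is the bookkeeping heart of the argument, and is precisely what the hypothesis $\op{cof}([\mathfrak{d}]^{<\aleph_1}) = \mathfrak{d}$ supplies.
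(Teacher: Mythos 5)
You have the right skeleton, and indeed the paper itself contains no proof of this proposition (it simply cites \cite[4.4.13]{Bartoszynski1995}, attributed to Just et al.), so your attempt can only be judged against that cited argument and on its own merits. Your two reductions are sound: nonmeagreness does follow from having, for each member $g_\xi$ of a dominating family, a generator whose enumerating function escapes $g_\xi$; and the $P$-property does reduce, via $\subseteq^*$-directedness of the generators, to capturing countable sets of indices by the cofinal family $\{s_\xi : \xi < \mathfrak{d}\}$. The genuine gap is exactly the step you flag and then wave away: maintaining the strong finite intersection property of the whole generating family across the recursion. Your proposed resolution --- that ``cofinality of $\{s_\xi\}$ forces any two generators to be jointly refined by some later $A_\zeta$, so directedness comes for free'' --- is circular. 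The later stage $\zeta$ with $\eta, \xi \in s_\zeta$ can only produce a joint refinement if the countable family $\{A_\mu : \mu \in s_\zeta\}$ has the strong finite intersection property (otherwise it has no pseudo-intersection, and stage $\zeta$ cannot even be carried out); but that is precisely the invariant whose preservation is in question. Nothing in your stage-$\xi$ step prevents the chosen pseudo-intersection $A_\xi$ of $\{A_\eta : \eta \in s_\xi\}$ from being almost disjoint from some earlier generator $A_\gamma$ with $\gamma \notin s_\xi$. Once two generators are almost disjoint, the generated filter is improper and no later stage can repair this, since almost disjoint sets admit no common infinite almost-subset. Nor can you simply choose $A_\xi$ to be compatible with every $A_\gamma$, $\gamma < \xi$: that is an uncountable system of side conditions, and the existence of a pseudo-intersection of a countable subfamily compatible with a given large family with the finite intersection property is not a ZFC triviality --- it is essentially the obstruction that makes the existence of nonmeagre $P$-filters a nontrivial (indeed, possibly large-cardinal-related) question in the first place.

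Closing this gap is the real content of the cited proof: the cofinal family of countable sets must carry extra coherence, not just cofinality. One needs, roughly, supports that are closed under the support operation $\eta \mapsto s_\eta$ and are internally directed, so that the countable family refined at stage $\xi$ is $\subseteq^*$-directed \emph{by construction} and every earlier generator is tied into the same coherent system (such families are typically extracted from $\operatorname{cof}([\mathfrak{d}]^{<\aleph_1}) = \mathfrak{d}$ using chains of countable elementary submodels or an equivalent combinatorial closing-off argument). Without some such device your induction does not close, so as written the proof is incomplete at its central step, even though the global architecture (recursion of length $\mathfrak{d}$, escaping clauses for nonmeagreness, bookkeeping of countable supports for the $P$-property) matches the intended argument.
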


\begin{prop}[ \textnormal{\cite[4.4.14]{Bartoszynski1995}}]
	If $2^{\aleph_0}<\aleph_\omega$, then there is a nonmeagre $P$-filter.
\end{prop}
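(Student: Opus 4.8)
The plan is to prove that $2^{\aleph_0} < \aleph_\omega$ implies the existence of a nonmeagre $P$-filter, which is the result $\cite[4.4.14]{Bartoszynski1995}$. Since the final statement is quoted from Bartoszy\'nski--Judah, I would proceed along the lines of their argument, combining the combinatorial characterization of nonmeagreness recalled just before the statement with the preceding propositions in the excerpt.

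First I would use the criterion stated in the text: a filter $\mathcal{F}$ on $\omega$ is nonmeagre if and only if the set of enumerating functions of its elements is unbounded in $(\omega^\omega, \leq^*)$. This reduces the problem to constructing a $P$-filter whose enumerating functions form an unbounded family. The natural approach is a transfinite construction of length $2^{\aleph_0}$: build an increasing chain of filters $\langle \mathcal{F}_\alpha : \alpha < 2^{\aleph_0}\rangle$, each generated by countably many sets, maintaining the $P$-filter property (every countable subfamily has a pseudo-intersection in the filter) at each stage while simultaneously arranging that the enumerating functions outgrow each $g \in \omega^\omega$ under some bookkeeping of the $2^{\aleph_0}$ many potential dominating functions.

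The role of the hypothesis $2^{\aleph_0} < \aleph_\omega$ is the crucial point, and I expect it to be the main obstacle. The difficulty is that simply taking unions along the chain need not preserve the $P$-property at limit stages of uncountable cofinality: one must be able to find pseudo-intersections for countable subfamilies that may be scattered across cofinally many earlier stages. The hypothesis $2^{\aleph_0} < \aleph_\omega$ guarantees $\mathfrak{c}$ has countable cofinality bounded below $\aleph_\omega$, which lets one control the relevant cardinal invariants; concretely, the clean way to exploit it is to show it implies one of the sufficient hypotheses already recorded in the excerpt. In particular, one can argue that $2^{\aleph_0} < \aleph_\omega$ forces $\op{cof}([\mathfrak{d}]^{<\aleph_1}) = \mathfrak{d}$ (since $\mathfrak{d} \leq \mathfrak{c} < \aleph_\omega$ makes $\mathfrak{d}$ a cardinal of the form $\aleph_n$, and for such cardinals the cofinality of the poset of countable subsets equals $\mathfrak{d}$ itself), and then invoke the preceding Proposition (the Just--Melton--Prikry result) to conclude directly.

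Thus my preferred route is the short one: rather than redo the transfinite construction, I would reduce the statement to the already-cited Proposition $\cite[4.4.13]{Bartoszynski1995}$ by verifying the purely cardinal-arithmetic implication $2^{\aleph_0} < \aleph_\omega \Rightarrow \op{cof}([\mathfrak{d}]^{<\aleph_1}) = \mathfrak{d}$. The hard part is confirming this implication rigorously, which rests on the fact that if $\kappa = \aleph_n$ for finite $n$ then $\op{cof}([\kappa]^{<\aleph_1}) = \kappa$, provable by induction on $n$ using that $\op{cof}([\aleph_{n}]^{<\aleph_1})$ is controlled by $\op{cof}([\aleph_{n-1}]^{<\aleph_1})$ together with the regularity of $\aleph_n$. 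Once that cardinal-arithmetic fact is in hand, the existence of a nonmeagre $P$-filter is immediate from the earlier proposition, completing the proof.
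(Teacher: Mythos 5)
Your proposal is correct, and it is essentially the argument behind the paper's statement: the paper itself offers no proof (the proposition is quoted from \cite{Bartoszynski1995}), and the proof there is exactly your short route --- from $\mathfrak{d}\leq 2^{\aleph_0}<\aleph_\omega$ conclude $\mathfrak{d}=\aleph_n$ for some finite $n\geq 1$, check $\operatorname{cof}([\aleph_n]^{<\aleph_1})=\aleph_n$ by the induction you describe (countable subsets of the regular cardinal $\aleph_{n+1}$ are bounded, so a cofinal family is obtained from cofinal families on $\aleph_{n+1}$ many initial segments of size $\aleph_n$), and then apply the preceding proposition of Just et al. One minor slip in your discarded motivational sketch: $2^{\aleph_0}<\aleph_\omega$ does not give $\mathfrak{c}$ countable cofinality (indeed $\operatorname{cf}(2^{\aleph_0})>\aleph_0$ by K\"onig's theorem), but nothing in your final reduction depends on that remark.
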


\begin{prop}[ \textnormal{\cite[4.4.15]{Bartoszynski1995}}]
	If every $P$-filter is meager, then there exists an inner model with a large cardinal.
\end{prop}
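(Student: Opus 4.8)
The plan is to argue contrapositively: I would extract strong cardinal arithmetic from the assumption that no nonmeagre $P$-filter exists, and then feed that arithmetic into the covering machinery of inner model theory. The first step is to invoke the contrapositive of the preceding proposition \cite[4.4.13]{Bartoszynski1995}: since there is no nonmeagre $P$-filter, we must have $\operatorname{cof}([\mathfrak{d}]^{<\aleph_1}) > \mathfrak{d}$ (and, from the contrapositive of \cite[4.4.14]{Bartoszynski1995}, also $2^{\aleph_0} \geq \aleph_\omega$, though this will be forced automatically by what follows). Thus the entire weight of the argument rests on converting the covering failure $\operatorname{cof}([\mathfrak{d}]^{\aleph_0}) > \mathfrak{d}$ into a large-cardinal consequence.

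The next step is a purely combinatorial reduction of pcf type. Because $\mathfrak{d}$ always has uncountable cofinality (indeed $\operatorname{cf}(\mathfrak{d}) \geq \mathfrak{b} \geq \aleph_1$), every countable subset of $\mathfrak{d}$ is bounded, so $[\mathfrak{d}]^{\aleph_0} = \bigcup_{\alpha < \mathfrak{d}} [\alpha]^{\aleph_0}$, and a short union-counting argument gives $\operatorname{cof}([\mathfrak{d}]^{\aleph_0}) = \max\{\mathfrak{d}, \sup_{\alpha < \mathfrak{d}} \operatorname{cof}([\alpha]^{\aleph_0})\}$. Hence $\operatorname{cof}([\mathfrak{d}]^{\aleph_0}) > \mathfrak{d}$ produces some $\mu < \mathfrak{d}$ with $\operatorname{cof}([\mu]^{\aleph_0}) > \mathfrak{d} \geq \mu^+$. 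Choosing $\mu$ least with $\operatorname{cof}([\mu]^{\aleph_0}) > \mu^+$, the standard bookkeeping for how $\operatorname{cof}([\,\cdot\,]^{\aleph_0})$ behaves at regular cardinals, at successors, and at cardinals of uncountable cofinality forces $\mu$ to be singular of cofinality $\omega$. So I would arrive at a singular cardinal $\mu$ of cofinality $\omega$ at which $\operatorname{cof}([\mu]^{\aleph_0}) > \mu^+$ --- a genuine failure of the singular cardinals hypothesis, in the covering-number form that is insensitive to the size of $2^{\aleph_0}$.

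The final and decisive step is to invoke the covering lemma. If there were no inner model with a measurable cardinal, then the Dodd--Jensen core model $K$ would exist and satisfy the covering property, and covering delivers $\operatorname{cof}([\mu]^{\aleph_0}) = \mu^+$ for every singular $\mu$ of cofinality $\omega$ (this is precisely how Jensen's covering lemma for $L$, and its generalizations by Dodd--Jensen and Mitchell, yield the singular cardinals hypothesis in the absence of inner models with measurables). This contradicts the conclusion of the previous paragraph, so there must be an inner model with a measurable cardinal, and in particular an inner model with a large cardinal.

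The main obstacle is entirely the last step. The implication from a failure of the covering number $\operatorname{cof}([\mu]^{\aleph_0})$ to an inner-model large cardinal is not elementary and draws on the full strength of core model theory and the covering lemmas of Jensen, Dodd--Jensen, and Mitchell (the sharp consistency strength, due to Gitik, being that of a measurable cardinal of high Mitchell order). Everything upstream of it --- the contrapositive use of the earlier propositions and the reduction to a singular cardinal of cofinality $\omega$ --- is comparatively routine once those tools are granted.
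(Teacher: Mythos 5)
Your proposal is correct and follows essentially the same route the paper indicates: the contrapositive of \cite[4.4.13]{Bartoszynski1995} turns the nonexistence of a nonmeagre $P$-filter into $\operatorname{cof}([\mathfrak{d}]^{<\aleph_1})\neq\mathfrak{d}$, and, as the paper's closing remark says, this failure entails a failure of the Covering Lemma and hence an inner model with a measurable cardinal. Your pcf-style localization to a singular $\mu$ of cofinality $\omega$ with $\operatorname{cof}([\mu]^{\aleph_0})>\mu^+$, followed by Dodd--Jensen covering over the core model, simply fills in the details that the paper delegates to the citation.
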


The point is that if cof$([\mathfrak{d}]^{<\aleph_1})\neq\mathfrak{d}$, then the \emph{Covering Lemma} fails. For further discussion, see \cite{Bartoszynski1995}.

Another property of a filter on the \cite{Kunen2015} list that is currently the subject of much research is \emph{countable dense homogeneity}.

\begin{defn}
	A space is \textbf{countable dense homogeneous} (briefly, \textbf{CDH}) if for every pair $\langle D,E\rangle$ of countable dense subsets of $X$, there exists a homeomorphism $h:X\to X$ such that $h[D]=E$.
\end{defn}

\cite{Hernandez-Gutierrez2014} gives a ZFC example of an uncountable CDH completely Baire subspace of $\mathbb{R}$ which is not completely metrizable. Unfortunately, as Jan van Mill pointed out to us, the space does include a dense completely metrizable subspace, and so is weakly $\alpha$-favorable. The reason is that the space (Corollary 4.6 of \cite{Hernandez-Gutierrez2014}) is the complement in the Cantor set of a particular kind of \emph{$\lambda$-set}. $\lambda$-sets have every countable subset relative $G_\delta$ and are meager. Thus their complements include dense $G_\delta$'s.

After the present work was completed, Lyubomyr Zdomskyy \cite{Talla} showed that we could replace a non-meager P-filter by the complement in the Cantor set of a non-meager Menger filter extending the Fr\'echet filter. Such a Menger filter exists in ZFC.

To definitively bury the question of whether the omitting types property is the same as Baire category, it suffices to find a space $X$ satisfying the Baire Category Theorem while the type-space functor it generates does not satisfy OTT. A Baire $X$ such that $X^\omega$ is not Baire, and hence has no dense $G_\delta$ Baire subspaces will suffice, e.g. the Baire $X$ with $X^2$ not Baire of Fleissner and Kunen \cite{Fleissner1978} will do the trick. A more nuanced example is due to Aarts and Lutzer \cite{Aarts1973}. They construct a completely Baire separable metric space with a dense completely metrizable subspace such that $X^2$ is not completely Baire. $X$ is actually weakly $\alpha$-favorable, so $X^\omega$ is as well, so $X^\omega$ is Baire, but not completely Baire.

\subsection{Definability}
Since in this note we are dealing with logics with countable signatures and hence with separable metrizable spaces, it is natural to enter descriptive set theory and ask whether there is (consistently?) a \emph{definable} (in the sense of descriptive set theory) example distinguishing the game omitting types property from the classical (or strong) omitting types property.  In this section we are considering only type-space functors generated by separable metrizable spaces.  

Recall that a subset of a Polish space is \emph{analytic} if it is the continuous image of the space of irrational numbers (which we identify with ${}^{\omega} \omega$), and is \emph{co-analytic} if its complement is analytic.  Slightly more generally, a separable metrizable space is \emph{co-analytic} if it is homeomorphic to a co-analytic subset of some Polish space.

\begin{lem}
The property of being co-analytic is preserved by the following topological operations:
\begin{enumerate}
\item{Countable intersections,}
\item{passing to open subspaces,}
\item{countable products.}
\end{enumerate}
\end{lem}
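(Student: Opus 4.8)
The plan is to reduce all three preservation properties to standard closure properties of the boldface pointclass $\mb{\Pi}^1_1$ inside a single fixed Polish space, the only nontrivial ingredient being that co-analyticity of a separable metrizable space is \emph{absolute} — that is, independent of the choice of Polish overspace into which the space is embedded. Concretely, I would first record the following absoluteness fact: if $X$ is a separable metrizable space that embeds as a co-analytic subset of some Polish space, and $h : X \to P$ is any homeomorphism onto a subspace of a Polish space $P$, then $h[X]$ is co-analytic in $P$. The proof goes through Lavrentiev's theorem: given two embeddings $f : X \to P_1$ and $g : X \to P_2$ into Polish spaces, the homeomorphism $g \circ f^{-1}$ between the images extends to a homeomorphism between $G_\delta$ subsets $\widetilde{f[X]} \supseteq f[X]$ and $\widetilde{g[X]} \supseteq g[X]$; since $G_\delta$ sets are Borel and Borel preimages of $\mb{\Pi}^1_1$ sets are $\mb{\Pi}^1_1$, co-analyticity transfers from one embedding to the other. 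This absoluteness step is where essentially all the real work lies.

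With absoluteness in hand, each operation is routine. For open subspaces, embed $X$ as a co-analytic subset of a Polish space $P$; if $U \subseteq X$ is open then $U = V \cap X$ for some open (hence Borel, hence $\mb{\Pi}^1_1$) set $V \subseteq P$, and $\mb{\Pi}^1_1$ is closed under finite intersections, so $U$ is co-analytic in $P$. For countable intersections of co-analytic subspaces $\{A_n\}_{n < \omega}$ of a common separable metrizable space, embed the ambient space in a Polish space $P$; by absoluteness each $A_n$ is $\mb{\Pi}^1_1$ in $P$, and $\mb{\Pi}^1_1$ is closed under countable intersections, so $\bigcap_n A_n$ is $\mb{\Pi}^1_1$ in $P$. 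For countable products, embed each $X_n$ as a co-analytic subset $C_n$ of a Polish space $P_n$; then $\prod_n P_n$ is Polish, and $\prod_n C_n = \bigcap_n \pi_n^{-1}(C_n)$, where $\pi_n : \prod_k P_k \to P_n$ is the continuous projection. Each $\pi_n^{-1}(C_n)$ is co-analytic because continuous preimages of $\mb{\Pi}^1_1$ sets are $\mb{\Pi}^1_1$, so the product is co-analytic by the countable-intersection case just established.

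The main obstacle, as noted, is the absoluteness lemma; once it is granted, the three clauses follow immediately from the elementary facts that $\mb{\Pi}^1_1$ is closed under countable intersections and under continuous (indeed Borel) preimages within a fixed Polish space. I would cite Lavrentiev's theorem and the basic stability properties of the projective hierarchy rather than reprove them, and present the items in the order open subspaces, countable intersections, countable products, since the product case is built on the intersection case.
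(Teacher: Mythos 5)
Your proof is correct, but it takes a partly different route from the paper's, and it is worth recording where the two diverge. The shared skeleton is the reduction of clause (3) to clause (1): both you and the paper write the countable product as a countable intersection of cylinder sets inside one fixed Polish product space. The difference is in how the cylinders are shown to be co-analytic: you invoke closure of $\mb{\Pi}^1_1$ under continuous preimages (applied to the projections $\pi_n$), whereas the paper argues dually through analytic sets --- it first proves that countable products of analytic sets are analytic (a countable product of continuous images of ${}^{\omega}\omega$ is a continuous image of $({}^{\omega}\omega)^{\omega}$, which is homeomorphic to ${}^{\omega}\omega$) and then exhibits each cylinder as the complement of such a product, e.g.\ $C_n \times \mbb{R}^{\omega} \cong \mbb{R}^{\omega+1}\setminus(A_n \times \mbb{R}^{\omega})$. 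Your second, more substantive, departure is the absoluteness lemma proved via Lavrentiev's theorem. The paper never addresses the dependence of co-analyticity on the choice of embedding: its proof of (3) literally takes each $C_n$ to be a co-analytic subset of $\mbb{R}$, which is not the general case (a co-analytic space such as $\mbb{R}^{2}$ or the Hilbert cube need not embed in $\mbb{R}$ at all), and repairing this implicitly uses exactly the absoluteness fact you isolate, namely that a space homeomorphic to a co-analytic subset of \emph{some} Polish space has co-analytic image under \emph{every} embedding into a Polish space. So your write-up is more careful on a point the paper glosses over, at the cost of invoking Lavrentiev's theorem, which is heavier machinery than the elementary image-and-complement manipulations the paper uses; both arguments are legitimate, the lemma being folklore in any case and the paper itself only sketching a proof ``for the convenience of the reader.''
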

\begin{proof}
(1) is stated in \cite[p. 242]{Kechris1995}, and (2) follows from (1) plus the fact that every open set is co-analytic (again, see \cite[p. 242]{Kechris1995}).  We have been unable to find a proof in print of (3), which is undoubtedly folklore, so we sketch one for the convenience of the reader.

We first verify that a countable product of analytic sets is analytic.  An analytic set $A$ is a continuous image of ${}^\omega \omega$.  Then a countable product $\prod_{n < \omega}A_n$ of analytic sets is a continuous image of $({}^{\omega} \omega)^{\omega}$, which is homeomorphic to ${}^{\omega} \omega$.  

Now suppose that we have co-analytic sets $C_n = \mathbb{R} \setminus A_n$, where each $A_n$ is analytic.  Then $\prod_{n < \omega}C_n = \cap_{n < \omega} (\mathbb{R} \times \mathbb{R} \times C_n \times \mathbb{R} \times \cdots)$.    Since countable intersections of co-analytic sets are co-analytic, it suffices to prove that $C_n \times \mathbb{R}^\omega$ is co-analytic.  From the previous paragraph $\mathbb{R}^{\omega}$, $\mathbb{R}^{\omega+1}$, and $A_n \times \mathbb{R}^{\omega}$ are analytic.  Thus the fact that $C_n \times \mathbb{R}^{\omega}$ is co-analytic follows from the fact that $C_n \times \mathbb{R} = (\mathbb{R} \setminus A_n) \times \mathbb{R}^\omega$, which is homeomorphic to $\mathbb{R}^{\omega+1} \setminus (A_n \times \mathbb{R}^\omega)$.
\end{proof}

\begin{thm}\label{thm31}
	If $X$ is co-analytic, and $S$ is the type-space functor of $X$, then $S$ satisfies the game omitting types property if and only if it satisfies the strong omitting types property.
\end{thm}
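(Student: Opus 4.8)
The plan is to translate both omitting types properties into purely topological statements about $S_{\mc{W}}$ and then reduce everything to a single dichotomy for co-analytic sets. Note first that $X$ is separable metrizable (being co-analytic), so $S$ has enough countable models by Lemma \ref{lem:EnoughCountableModels}, and $S_\omega = X^\omega$ is metrizable by Proposition \ref{prop:Xomega}; in particular $S_{\mc{W}}$ is regular, hence quasi-regular, so the hypotheses of Theorem \ref{thm:StrongOTT} are met. Thus the strong omitting types property is equivalent to $S_{\mc{W}}$ being completely Baire, while by Theorem \ref{thm:GameOTT} the game omitting types property is equivalent to $S_{\mc{W}}$ being completely weakly $\alpha$-favourable. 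Since every weakly $\alpha$-favourable space is Baire, the game property always implies the strong one, and only the converse requires the co-analyticity hypothesis: assuming $S_{\mc{W}}$ is completely Baire, I must show it is completely weakly $\alpha$-favourable.

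Next I would establish that the relevant spaces are co-analytic, so as to reduce to a pointwise statement. Since $X$ is co-analytic, the preservation lemma above (countable products) gives that $X^\omega = S_\omega$ is co-analytic. Each $S_n = X^n$ is second countable, so by Lemma \ref{lem:SizeOfW} together with Lemma \ref{lem:EnoughCountableModels} the set $S_{\mc{W}}$ is a dense $G_\delta$ in $X^\omega$; as a $G_\delta$ is a countable intersection of open sets, parts (1) and (2) of the preservation lemma show $S_{\mc{W}}$ is co-analytic. Because $S_{\mc{W}}$ is metrizable, every closed subspace $Z \subseteq S_{\mc{W}}$ is a $G_\delta$ in $S_{\mc{W}}$, hence a $G_\delta$ in $X^\omega$, hence again co-analytic; and $Z$ is Baire by the completely Baire assumption. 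The theorem therefore reduces to the following pointwise claim: every co-analytic Baire separable metrizable space is weakly $\alpha$-favourable. Once this is proved, applying it to each closed $Z \subseteq S_{\mc{W}}$ yields complete weak $\alpha$-favourability.

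For the pointwise claim my plan is to use an \emph{unfolded} Banach--Mazur game, which keeps the argument within ZFC. Realize $Z$ as a co-analytic subspace of a Polish space $P$ (for instance the closure of $Z$ in the completion of $X^\omega$), so that $P \setminus Z$ is analytic, say the projection to $P$ of a closed set $F \subseteq P \times {}^{\omega}\omega$. Play the Banach--Mazur game on $Z$ with the standard convention that NONEMPTY may shrink closures and diameters, so that a run determines a single point $x \in P$; by the Banach--Mazur theorem quoted before the omitting types game, EMPTY has no winning strategy precisely because $Z$ is Baire, and NONEMPTY has a winning strategy precisely when $Z$ is weakly $\alpha$-favourable. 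Now unfold the game by having EMPTY additionally build, coordinate by coordinate, a candidate branch $y \in {}^{\omega}\omega$, and declare EMPTY the winner if the limit point $x$ satisfies $(x,y) \in F$, i.e. $y$ witnesses $x \notin Z$. The payoff for EMPTY is then closed, so the unfolded game is determined by the Gale--Stewart theorem. If EMPTY won the unfolded game, its projection would win the ordinary game on $Z$, contradicting that $Z$ is Baire; hence NONEMPTY wins the unfolded game, and projecting its strategy gives a winning strategy in the ordinary game on $Z$, so $Z$ is weakly $\alpha$-favourable. (Equivalently, by Telg\'arsky \cite{Telgarsky1987} this says $Z$ has a dense completely metrizable subspace, matching the criterion already used in Lemma \ref{lem:11}.) The main obstacle is exactly the bookkeeping in this last step: arranging the auxiliary branch so that EMPTY's payoff is genuinely closed, and choosing the diameter- and closure-control conventions so that ``NONEMPTY wins'' coincides with ``$\bigcap_n O_n$ is a nonempty subset of $Z$''. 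These are routine but delicate; the key conceptual point is that unfolding converts the co-analytic payoff into a \emph{closed} one, so only Gale--Stewart determinacy is needed and no large-cardinal or projective-determinacy hypothesis enters — which is precisely why co-analytic examples cannot separate the two omitting types properties.
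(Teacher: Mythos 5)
Your overall architecture is sound, and it is genuinely different from the paper's. The paper applies the Hurewicz dichotomy once: by \cite[21.21]{Kechris1995}, a completely Baire co-analytic set is completely metrizable, so $S_{\mc{W}}$ is Polish and hence completely weakly $\alpha$-favourable. You instead reduce to a pointwise claim (every co-analytic Baire separable metrizable space is weakly $\alpha$-favourable) and apply it to each closed subspace of $S_{\mc{W}}$. Your reductions are all correct: enough countable models and quasi-regularity so that Theorems \ref{thm:StrongOTT} and \ref{thm:GameOTT} apply, co-analyticity of $S_{\mc{W}}$ and of its closed subspaces, and the easy direction (game implies strong, since weakly $\alpha$-favourable spaces are Baire). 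Moreover your pointwise claim is true, and in fact needs far less than Hurewicz: every co-analytic set has the Baire property, and if $Z$ is a dense Baire subspace of a Polish space $P$ with $Z = U \triangle M$ ($U$ open, $M$ meager in $P$), then $U$ must be dense in $P$ --- otherwise, for any nonempty open $W$ disjoint from $U$, the set $Z \cap W$ is a nonempty relatively open subspace of $Z$ that is meager in itself, contradicting that $Z$ is Baire. Hence $Z \supseteq U \setminus M$ contains a dense $G_\delta$ of $P$, i.e.\ a dense completely metrizable subspace, and Telg\'arsky \cite{Telgarsky1987} finishes.

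The genuine gap is in your proof of the pointwise claim, at the words ``projecting its strategy gives a winning strategy in the ordinary game on $Z$.'' Unfolding is asymmetric: a strategy projects only for the player who plays the extra witness coordinates (here EMPTY), since that player can play the witnesses privately and reveal only the open sets. NONEMPTY's winning strategy in the unfolded game is a function of histories that include EMPTY's auxiliary moves $y_0, \ldots, y_n$; in the ordinary game there is nothing to feed it, and different imagined witness sequences produce different, incompatible responses. This is precisely why, in every standard unfolding argument (the unfolded $*$-game for the perfect set property, the unfolded Banach--Mazur game for the Baire property of $\Sigma^1_1$ sets, the Hurewicz game), the conclusion extracted from player II's winning strategy requires a separate, nontrivial argument rather than projection; the step you set aside as ``routine but delicate bookkeeping'' is actually the heart of the proof. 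It can be repaired within your framework: from NONEMPTY's winning strategy $\tau$ one shows directly that $P \setminus Z$ is meager in $P$, by proving that the set of points $x$ such that every $\tau$-consistent position whose last move contains $x$ can be extended --- with any prescribed next witness coordinate and arbitrarily small diameter --- to a longer $\tau$-consistent position whose last move still contains $x$, is comeager in $P$, and that no such $x$ can lie in $P \setminus Z$ (a genuine witness $y$ would let EMPTY defeat $\tau$). That again puts a dense $G_\delta$ of $P$ inside $Z$. Alternatively, drop the game entirely and use the Baire-property argument sketched above, or simply quote \cite[21.21]{Kechris1995} as the paper does.
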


\begin{proof}
Products of co-analytic spaces are co-analytic, so $S_\omega = X^\omega$ is co-analytic.  Open subspaces of co-analytic spaces are co-analytic, and countable intersections of co-analytic spaces are co-analytic, so since $S_{\mc{W}}$ is $G_\delta$ in $S_\omega$ (Lemma \ref{lem:SizeOfW}), $S_{\mc{W}}$ is co-analytic.

If $S$ satisfies the strong omitting types property then $S_{\mc{W}}$ is completely Baire, by \ref{thm:StrongOTT}.    By \cite[21.21]{Kechris1995}, completely Baire co-analytic sets are completely metrizable, and hence by Theorem \ref{thm:GameOTT} $S$ satisfies the game omitting types property.
\end{proof}

Theorem \ref{thm31} can be extended to all projective $X$ if one assumes the Axiom of Projective Determinacy, or a perfect set variation of the Open Graph Axiom \cite{Talla}. How definable consistent counterexamples can be is an interesting question that remains to be explored. The methods of \cite{Medini2015}, \cite{Tall}, and \cite{Talla} can be expected to be useful.
\bibliographystyle{amsplain}
\bibliography{OmittingTypesBaire}
\end{document}